\documentclass[11pt]{article}
\usepackage{amsmath,amsthm,amsfonts,amscd,bezier}
\usepackage{amssymb}
\usepackage{enumerate}
\usepackage[T1]{fontenc}
\usepackage[usenames]{color}

\newtheorem{lemma}{Lemma}[section]

\newtheorem{theorem}[lemma]{Theorem}
\newtheorem{remark}[lemma]{Remark}
\newtheorem{proposition}[lemma]{Proposition}
\newtheorem{corollary}[lemma]{Corollary}
\newtheorem{definition}[lemma]{Definition}
\newtheorem{example}[lemma]{Example}
\newcommand{\Dem}{\noindent{\sc Proof:\ \ }}
\newcommand{\cqd}{{\hfill $\rule{2mm}{2mm}$}\vspace{1cm}}

\setlength{\topmargin}{-1cm}\setlength{\textwidth}{16cm}
\setlength{\oddsidemargin}{0cm} \setlength{\textheight}{20cm}
\setlength{\evensidemargin}{0cm}

\title{On the Saito basis for plane curves}

\author{Emilio de Carvalho, Percy Fern\'{a}ndez-S\'{a}nchez  \\ and \\  Marcelo Escudeiro Hernandes\thanks{The third author was partially supported by CNPq.}}

\date{ \ }

\begin{document}
\maketitle


\begin{abstract} We present some results concerning the Saito module and the torsion submodule of an analytic plane curve, and we provide a method for computing them. Using this algorithm, we compute analytic invariants for plane curves with multiplicity less than or equal to three.\end{abstract}

\maketitle \markboth{E. Carvalho, P. Fern\'{a}ndez-S\'{a}nchez and M. E.
Hernandes}{On the Saito module and the Torsion submodule of analytic plane curves}

 Mathematics Subject Classification: Primary 14H20; Secondary 32S10.
 
  keywords: Saito module, Torsion submodule, plane curves, analytic invariants. 
  
\section{Introduction}

Let $\mathcal{C}_f$ be the germ of a reduced analytic plane curve defined by $f\in\mathbb{C}\{x,y\}$, that is
$$\mathcal{C}_f=\{(\alpha_1,\alpha_2)\in U;\ f(\alpha_1,\alpha_2)=0\ \mbox{for some neighborhood}\  U\ \mbox{at}\ (0,0)\in\mathbb{C}^2\}.$$ 

The topological type of $\mathcal{C}_f$ is completely characterized by its values semigroup $\Gamma(f)$, as shown in \cite{Delgado}, \cite{Waldi} and \cite{Z-top}.

In \cite{HR}, a solution to the problem of analytic classification of plane curves is presented by fixing a topological class and the set $\Lambda(f)$ of orders (along the curve) of the K\"ahler differential module $\Omega_f$ of the local ring $\mathcal{O}_f:=\frac{\mathbb{C}\{x,y\}}{\langle f\rangle}$.

The set $\Lambda(f)$ associated to a plane curve $\mathcal{C}_f$ is an analytical invariant and it is related to another analytic invariant: the Tjurina number $\tau(f):=\dim_{\mathbb{C}}\frac{\mathbb{C}\{x,y\}}{\langle f,f_x,f_y\rangle}$ (see \cite{Almiron} and \cite{comprimento} for instance).

On the other hand, Saito (see \cite{saito}) introduced the $\mathbb{C}\{x,y\}$-module of logarithmic 1-form $\Omega^1(\log\ \mathcal{C}_f)$ and the module of logarithmic vector fields $Der(log\ \mathcal{C}_f)$. Saito showed that $\Omega^1(\log\ \mathcal{C}_f)$ and $Der(log\ \mathcal{C}_f)$ are reflexive and free modules of rank 2 (see \cite[Corollary 1.7]{saito}). 

Considering $S(f):=f\cdot \Omega^1(\log\ \mathcal{C}_f)$ and using (1.1) pag. 266 in \cite{saito} we get
$$S(f):=\{\omega\in\Omega^1;\ \exists g, p\in\mathbb{C}\{x,y\}, \eta\in\Omega^1,\ gcd(g,f)=1\ \mbox{such that}\ g\omega=pdf+f\eta\},$$
where $\Omega^1:=\mathbb{C}\{x,y\}dx+\mathbb{C}\{x,y\}dy$ is the module of holomorphic 1-form at $(0,0)\in\mathbb{C}^2$. We call $S(f)$ the {\it Saito module} of $\mathcal{C}_f$.

Genzmer (see \cite{yohann}, \cite{yohann2} and \cite{yohann-marcelo}) has used the Saito module of $\mathcal{C}_f$ to obtain analytic data as the Tjurina number, the moduli number, etc. 

Let $\mathcal{K}_f$ denote the total ring of fractions of $\mathcal{O}_f$. Saito introduced the $\mathcal{O}$-module $\mathcal{R}(f)$ of residues of $S(f)$ given by
$$\mathcal{R}(f):=\left \{ \overline{\frac{p}{g}}\in \mathcal{K}_f;\ g\omega=pdf+f\eta\ \mbox{for}\ \omega\in S(f)\right \},$$
where the bar denotes the class of an element of $\mathbb{C}\{x,y\}$ in $\mathcal{O}_f$. The set $\Delta(f)$ of orders (along the curve) of the elements in $\mathcal{R}_f$ is an analytic invariant of $\mathcal{C}_f$ which determines and it is determined by $\Lambda(f)$ (see \cite{Pol}), and hence is also related to the Tjurina number of $\mathcal{C}_f$ (see \cite{Almiron}, \cite{transaction} and \cite{comprimento} for instance).

Since the Saito module $S(f)$ admits a basis $\{\omega_1, \omega_2\}\in\Omega^1$ (called a Saito basis), it is natural to explore properties of theses generators. As we mentioned, such generators provide analytical information about the curve. Moreover, $\omega_1$ and $\omega_2$ define saturated holomorphic foliations that leave $\mathcal{C}_f$ invariant, and their residues yield the GSV-index of these foliations. Presenting these and other properties of a Saito basis is one of the goals of Section 2.

There are some algorithms in the literature for computing a Saito basis (see \cite{tajima1}, \cite{tajima2} and \cite{walcher} for instance). In Section 3, we present an algorithm based on a Standard Basis for the K\"ahler differential module of the curve, which in turn can be computed using the results of \cite{carvalho}. Our algorithm may be viewed as a variation of the method presented in \cite{felipe-nuria-david} for the case of irreducible curves whose semigroup is generated by two elements.

In the Section 4, we apply our results to compute some analytic invariants ($\tau(f), \Lambda(f), S(f)$, etc.) for plane curves with multiplicity up to 3. These data may be of interest from the perspective of Commutative Algebra, Singularity Theory, and Foliation Theory. 

\section{The Saito module}

Let $f\in\mathbb{C}\{x,y\}$ be a reduced, non-unit holomorphic power series at $(0,0)\in\mathbb{C}^2$, and let $\mathcal{C}_f$ be the germ of the plane curve defined by $f$. The multiplicity of $f$ (or $\mathcal{C}_f$) is the positive integer $m(f)$ such that $f\in\langle x,y\rangle^{m(f)}\setminus\langle x,y\rangle^{m(f)+1}$. If $m(f)=1$, we say that $f$ (or $\mathcal{C}_f$) is regular.

In what follows, we denote by $\Omega^1=\mathbb{C}\{x,y\}dx+\mathbb{C}\{x,y\}dy$ the  
module of germs of holomorphic 1-form at the origin of $\mathbb{C}^2$ and we consider its submodule
$$F(f):=\{pdf+f\eta;\ p\in\mathbb{C}\{x,y\}\ \mbox{and}\ \eta\in\Omega^1\},$$
where $df=f_xdx+f_ydy$ is the differential of $f$.

Following the ideas of Saito in \cite{saito}, we define:

\begin{definition}\label{saito-module}
The {\bf Saito module} of $f$ (or $\mathcal{C}_f$) is
$$S(f)=\{\omega\in\Omega^1;\ g\omega\in F(f)\ \mbox{for some}\ g\in\mathbb{C}\{x,y\}\ \mbox{with}\ gcd(g,f)=1\}.$$
\end{definition}

In this section, we present some results concerning the Saito module of a plane curve $\mathcal{C}_f$, some of which may be known, but are included here with detailed proofs.

\begin{remark}\label{remark-equi-saito}
According to \cite[(1.1) page 266]{saito}, the condition
$g\omega\in F(f)$ for some $g\in\mathbb{C}\{x,y\}$ coprime with $f$, is equivalent to $\omega\wedge df=hfdx\wedge dy$ for some $h\in\mathbb{C}\{x,y\}$. We call the element $h$ the {\it cofactor of $\omega$}.
\end{remark}

\begin{lemma}\label{aux}
Let $\omega_1=A_1dx+B_1dy, \omega_2=A_2dx+B_2dy\in S(f)$ such that $\omega_i\wedge df=h_ifdx\wedge dy$. Then $\omega_1\wedge \omega_2=hfdx\wedge dy$ where $h=\frac{B_2h_1-B_1h_2}{f_y}=\frac{A_2h_1-A_1h_2}{f_x}\in\mathbb{C}\{x,y\}$.
\end{lemma}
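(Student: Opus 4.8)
The plan is to translate the statement into an identity among the coefficient functions, isolate the determinant $J := A_1B_2 - A_2B_1$, and reduce everything to the single divisibility assertion $f \mid J$ in $\mathbb{C}\{x,y\}$. First I would expand the relevant wedge products. Since $df = f_x\,dx + f_y\,dy$, a direct computation gives $\omega_i \wedge df = (A_i f_y - B_i f_x)\,dx\wedge dy$, so by Remark \ref{remark-equi-saito} the hypothesis $\omega_i \wedge df = h_i f\,dx\wedge dy$ is exactly the scalar identity
$$A_i f_y - B_i f_x = h_i f, \qquad i = 1,2.$$
In the same way $\omega_1 \wedge \omega_2 = (A_1 B_2 - A_2 B_1)\,dx\wedge dy = J\,dx\wedge dy$, so the lemma amounts to showing that $f$ divides $J$ and that the quotient $h = J/f$ coincides with the two displayed fractions.

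Next I would eliminate the partials from the two scalar identities. Multiplying the $i=1$ identity by $B_2$ and the $i=2$ identity by $B_1$ and subtracting cancels the $f_x$ terms and yields $J f_y = (B_2 h_1 - B_1 h_2)f$; the symmetric combination with $A_2$ and $A_1$ cancels the $f_y$ terms and gives $J f_x = (A_2 h_1 - A_1 h_2)f$. In particular $f \mid J f_x$ and $f \mid J f_y$.

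The main point, and the step I expect to be the real obstacle, is upgrading ``$f$ divides $J f_x$ and $J f_y$'' to ``$f$ divides $J$''; here reducedness of $f$ is essential. I would argue factor by factor. Since $\mathbb{C}\{x,y\}$ is a UFD, write $f = p_1\cdots p_r$ as a product of pairwise non-associate irreducibles, distinct because $f$ is reduced. Fix a prime $p = p_i$ and write $f = pg$ with $\gcd(p,g)=1$. If $p \nmid J$, then $p\mid J f_x$ forces $p\mid f_x$, and since $f_x = p_x g + p g_x \equiv p_x g \pmod p$ with $p \nmid g$, this gives $p \mid p_x$; likewise $p\mid p_y$. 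But an irreducible non-unit cannot divide both of its partial derivatives: if $m = \mathrm{ord}(p)\ge 1$ and $\mathrm{in}(p)$ denotes the initial form, then $p\mid p_x$ and $p\mid p_y$ force the degree-$(m-1)$ parts to vanish, i.e. $(\mathrm{in}\,p)_x = (\mathrm{in}\,p)_y = 0$, whence Euler's relation $m\cdot\mathrm{in}(p) = x(\mathrm{in}\,p)_x + y(\mathrm{in}\,p)_y = 0$ contradicts $\mathrm{in}(p)\ne 0$. Hence $p\mid J$ for every $i$, and as the $p_i$ are distinct primes we conclude $f\mid J$.

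Finally, writing $J = hf$ with $h\in\mathbb{C}\{x,y\}$ and substituting into $J f_y = (B_2 h_1 - B_1 h_2)f$, I would cancel the nonzero factor $f$ in the domain $\mathbb{C}\{x,y\}$ to obtain $h f_y = B_2 h_1 - B_1 h_2$, and symmetrically $h f_x = A_2 h_1 - A_1 h_2$. Since $f$ is a non-unit reduced germ, $f_x$ and $f_y$ do not both vanish, so at least one of these identities may be solved for $h$, yielding both displayed formulas and completing the proof.
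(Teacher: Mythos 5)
Your proof is correct, and its computational core coincides with the paper's: both expand the hypotheses to $A_if_y - B_if_x = h_if$, eliminate to get $(A_1B_2-A_2B_1)f_y = (B_2h_1-B_1h_2)f$ and $(A_1B_2-A_2B_1)f_x = (A_2h_1-A_1h_2)f$, and then cancel $f$ after establishing $f \mid A_1B_2-A_2B_1$. The two arguments differ exactly at that divisibility step, and there yours is the stronger one. The paper asserts $\gcd(f,f_x) = \gcd(f,f_y) = 1$ and divides each equation by $f$; but that assertion can fail for reduced $f$: for $f = xy$ one has $\gcd(f,f_x)=y$ and $\gcd(f,f_y)=x$, and for $f = x(y^2-x^3)$ the factor $x$ divides $f_y = 2xy$. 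So the paper's proof, as written, is complete only when each partial derivative is coprime to $f$. Your factor-by-factor argument --- if an irreducible factor $p$ of the squarefree $f$ did not divide $J = A_1B_2-A_2B_1$, it would have to divide both $f_x$ and $f_y$, hence both $p_x$ and $p_y$, which the initial-form/Euler computation rules out --- relies only on the correct fact that no irreducible factor of a reduced germ divides both partials, and therefore proves the lemma in the stated generality; it is what the paper's coprimality step should have invoked, and the two equations must indeed be used jointly (as in the $f=xy$ example, where one equation yields the factor $x$ and the other the factor $y$). One caveat, shared by the statement itself and by both proofs: if $f_x$ or $f_y$ vanishes identically (e.g. $f = x$), the corresponding fraction in the lemma is not literally defined; the identities that hold in all cases are $hf_y = B_2h_1-B_1h_2$ and $hf_x = A_2h_1-A_1h_2$, which is exactly what your final step establishes.
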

\begin{proof} Since $\omega_i\wedge df=h_ifdx\wedge dy$, we have $A_if_y-B_if_x=h_if$. Thus,
$$(B_2h_1-B_1h_2)f=(A_1B_2-A_2B_1)f_y\ \ \mbox{and}\ \ (A_2h_1-A_1h_2)f=(A_1B_2-A_2B_1)f_x.$$
Hence, 
$$\frac{\omega_1\wedge\omega_2}{dx\wedge dy}=A_1B_2-A_2B_1=\frac{(B_2h_1-B_1h_2)f}{f_y}=\frac{(A_2h_1-A_1h_2)f}{f_x}.$$

Since $gcd(f,f_x)=gcd(f,f_y)=1$, it follows that $\omega_1\wedge\omega_2=hfdx\wedge dy$ with $h=\frac{B_2h_1-B_1h_2}{f_y}=\frac{A_2h_1-A_1h_2}{f_x}\in\mathbb{C}\{x,y\}$.
\end{proof}

The Saito module $S(f)$ is a free module of rank 2 (see Corollary 1.7 and Theorem page 270 in \cite{saito}). In addition, we have: 
\vspace{0.2cm}

\noindent {\bf (Saito's criterion)}\hspace{1cm} A pair $\{\omega_1,\omega_2\}$ is a $\mathbb{C}\{x,y\}$-basis for $S(f)$ if and only if \\

\vspace{-0.5cm}\hspace{3.8cm} $\omega_1\wedge\omega_2=ufdx\wedge dy\ \mbox{for some unit}\ u\in\mathbb{C}\{x,y\}.$
\vspace{0.2cm}

If $\{\omega_1,\omega_2\}$ satisfies the Saito's criterion then we call $\{\omega_1,\omega_2\}$ a {\bf Saito basis} for $S(f)$.

\begin{example}\label{qh}
Let $f$ be a quasi-homogeneous polynomial with weights $n, m$ and degree $\alpha$. By Euler's identity, we have $\alpha f=nxf_x+myf_y$ and, considering $\omega=nxdy-mydx$ we get $\omega\wedge df=-\alpha fdx\wedge dy$. So by the Saito's criterion, $\{nxdy-mydx,df\}$ is a Saito basis for $S(f)$.
\end{example}

Notice that a Saito basis for $S(f)$ is not unique. Indeed if $\{\omega_1, \omega_2\}$ is a Saito basis for $S(f)$ and $\alpha, \beta, \gamma, \delta\in\mathbb{C}$ with $\alpha\delta-\beta\gamma\neq 0$, then $\{\alpha\omega_1+\beta\omega_2, \gamma\omega_1+\delta\omega_2\}$ is also a Saito basis for $S(f)$.

If $\{\omega_1, \omega_2\}$ is a Saito basis for $S(f)$, we can easily express any element $\omega\in S(f)$ as $\omega=M_1\omega_1+M_2\omega_2$ with $M_i\in\mathbb{C}\{x,y\}$.

\begin{proposition}
Let $\{\omega_1,\omega_2\}$ be a Saito basis for $f$ such that $\omega_1\wedge\omega_2=ufdx\wedge dy$ with $u\in\mathbb{C}\{x,y\}$ a unit. If $\omega\in S(f)$ satisfies $\omega\wedge\omega_i=N_ifdx\wedge dy$ (see Lemma \ref{aux}), then $\omega=u^{-1}N_2\omega_1-u^{-1}N_1\omega_2$. 
\end{proposition}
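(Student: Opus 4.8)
The plan is to use the fact that $\{\omega_1,\omega_2\}$ is a basis for $S(f)$ to write $\omega=M_1\omega_1+M_2\omega_2$ with unknown coefficients $M_i\in\mathbb{C}\{x,y\}$, and then to recover these coefficients by wedging against each basis element, in the spirit of Cramer's rule. The point is that wedging with $\omega_2$ kills the $M_2$-term and isolates $M_1$, and symmetrically wedging with $\omega_1$ isolates $M_2$.

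First I would expand $\omega\wedge\omega_2$ using $\omega=M_1\omega_1+M_2\omega_2$. Since the wedge product of a $1$-form with itself vanishes, the $M_2(\omega_2\wedge\omega_2)$ term drops out and only $M_1(\omega_1\wedge\omega_2)=M_1 uf\,dx\wedge dy$ survives. Comparing this with the hypothesis $\omega\wedge\omega_2=N_2 f\,dx\wedge dy$, and using that $\mathbb{C}\{x,y\}$ is an integral domain with $f\neq 0$ and that $u$ is a unit, I would cancel $f$ and multiply by $u^{-1}$ to conclude $M_1=u^{-1}N_2$.

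Symmetrically, wedging with $\omega_1$ gives $\omega\wedge\omega_1=M_2(\omega_2\wedge\omega_1)=-M_2 uf\,dx\wedge dy$, where the minus sign comes from the antisymmetry $\omega_2\wedge\omega_1=-\omega_1\wedge\omega_2$. Comparing with $\omega\wedge\omega_1=N_1 f\,dx\wedge dy$ yields $M_2=-u^{-1}N_1$. Substituting both values back into $\omega=M_1\omega_1+M_2\omega_2$ produces exactly $\omega=u^{-1}N_2\omega_1-u^{-1}N_1\omega_2$.

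There is essentially no obstacle here; the argument is a direct linear-algebra computation in the rank-two free module $\Omega^1$. The only points requiring care are bookkeeping ones: tracking the sign arising from the antisymmetry of $\wedge$ so as to obtain the minus sign in front of $u^{-1}N_1$, and justifying the cancellation of $f$ (legitimate since $f\neq 0$ in the domain $\mathbb{C}\{x,y\}$) together with the inversion of $u$ (legitimate since $u$ is a unit). Note also that the hypothesis $\omega\in S(f)$ guarantees, via Lemma \ref{aux}, that the quantities $N_1,N_2$ indeed lie in $\mathbb{C}\{x,y\}$, so the final expression has holomorphic coefficients as required.
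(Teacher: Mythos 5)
Your proof is correct and follows essentially the same route as the paper's: write $\omega$ in the basis $\{\omega_1,\omega_2\}$, wedge against each basis element so that antisymmetry isolates one coefficient at a time, and cancel $f$ and the unit $u$ to solve for the coefficients. The only difference is cosmetic: you spell out the justification for cancelling $f$ (domain property) and for $N_1,N_2$ being holomorphic, which the paper leaves implicit.
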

\begin{proof} Let $\omega=Adx+Bdy, \omega_1=A_1dx+B_1dy$ and $\omega_2=A_2dx+B_2dy$.

Writing $\omega=q_1\omega_1+q_2\omega_2$, we obtain
$$N_1fdx\wedge dy=\omega\wedge\omega_1=q_2\omega_2\wedge\omega_1=-uq_2fdx\wedge dy$$
and
$$N_2fdx\wedge dy=\omega\wedge\omega_2=q_1\omega_1\wedge\omega_2=uq_1fdx\wedge dy.$$
Hence, $q_1=u^{-1}N_2,\ q_2=-u^{-1}N_1$ and  $\omega=u^{-1}N_2\omega_1-u^{-1}N_1\omega_2$. 
\end{proof}

By Remark \ref{remark-equi-saito}, if $\omega=Adx+Bdy\in S(f)$ then $Af_y-Bf_x=\frac{\omega\wedge df}{dx\wedge dy}= hf$, that is $h\in (J(f):f)$, where $J(f)=\langle f_x, f_y\rangle\subset \mathbb{C}\{x,y\}$ is the Jacobian ideal of $f$. Conversely, if $h\in (J(f):f)$, that is $hf=Af_y-Bf_x$, then we get $\omega=Adx+Bdy\in S(f)$. Therefore, we have $(J(f):f)=\{h;\ \omega\wedge df=hfdx\wedge dy\ \mbox{with}\ \omega\in S(f)\}$, which is called the {\bf cofactor ideal of $S(f)$}.  

\begin{lemma}\label{genideal} If $\{\omega_1,\omega_2\}$ is a Saito basis of $S(f)$ and $h_i\in\mathbb{C}\{x,y\}$ is the cofactor of $\omega_i$, then $(J(f):f)=\langle h_1,h_2\rangle$.
\end{lemma}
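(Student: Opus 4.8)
The plan is to exhibit both inclusions, treating the assignment of a cofactor to a form as a linear map whose image is exactly the cofactor ideal. The key structural observation is that the operation $\omega\mapsto \omega\wedge df$ is $\mathbb{C}\{x,y\}$-linear, and that wedging with $df$ followed by dividing by $f\,dx\wedge dy$ sends each $\omega\in S(f)$ to its (uniquely determined) cofactor. Concretely, for $\omega=Adx+Bdy\in S(f)$ we have $\omega\wedge df=(Af_y-Bf_x)\,dx\wedge dy=hf\,dx\wedge dy$, and since $\mathbb{C}\{x,y\}$ is a domain and $f\neq 0$, the element $h$ is unique; thus the cofactor map $\varphi\colon S(f)\to\mathbb{C}\{x,y\}$, $\varphi(\omega)=h$, is well defined, and the preceding discussion identifies its image with the cofactor ideal $(J(f):f)$.

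For the inclusion $\langle h_1,h_2\rangle\subseteq (J(f):f)$, I would simply note that $\omega_1,\omega_2\in S(f)$ have cofactors $h_1,h_2$, so $h_1,h_2\in(J(f):f)$; since $(J(f):f)$ is an ideal of $\mathbb{C}\{x,y\}$ (as already established in the paragraph preceding the lemma), every $\mathbb{C}\{x,y\}$-combination $M_1h_1+M_2h_2$ lies in it, giving the inclusion.

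For the reverse inclusion $(J(f):f)\subseteq\langle h_1,h_2\rangle$, I would take an arbitrary $h\in(J(f):f)$ and realize it as $\varphi(\omega)$ for some $\omega\in S(f)$. Because $\{\omega_1,\omega_2\}$ is a Saito basis, $S(f)=\mathbb{C}\{x,y\}\,\omega_1+\mathbb{C}\{x,y\}\,\omega_2$, so I may write $\omega=M_1\omega_1+M_2\omega_2$ with $M_i\in\mathbb{C}\{x,y\}$. Applying $\wedge df$ and using its linearity together with $\omega_i\wedge df=h_if\,dx\wedge dy$ gives $\omega\wedge df=(M_1h_1+M_2h_2)f\,dx\wedge dy$. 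Comparing with $\omega\wedge df=hf\,dx\wedge dy$ and cancelling the nonzero factor $f$ yields $h=M_1h_1+M_2h_2\in\langle h_1,h_2\rangle$.

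There is no serious obstacle here: the argument is essentially the statement that the cofactor ideal is the image of a linear map, and the image of a linear map out of a module generated by $\omega_1,\omega_2$ is generated by the images $h_1,h_2$. The only point requiring a small amount of care is the well-definedness (uniqueness) of the cofactor, which rests on $\mathbb{C}\{x,y\}$ being an integral domain and $f$ being nonzero; once this is in place, the equality $(J(f):f)=\langle h_1,h_2\rangle$ follows from the two inclusions above.
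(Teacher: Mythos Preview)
Your proof is correct and follows essentially the same approach as the paper's: both show the easy inclusion $\langle h_1,h_2\rangle\subseteq(J(f):f)$ directly, then for the reverse take $h\in(J(f):f)$, produce $\omega\in S(f)$ with cofactor $h$, write $\omega$ in the Saito basis, and use linearity of $\omega\mapsto\omega\wedge df$ to conclude $h\in\langle h_1,h_2\rangle$. The only cosmetic difference is that the paper explicitly constructs $\omega=Bdx-Ady$ from $hf=Af_x+Bf_y$, whereas you invoke the surjectivity of the cofactor map already established in the paragraph preceding the lemma.
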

\begin{proof} It is immediate that $\langle h_1,h_2\rangle\subset (J(f):f)$.
	
	On the other hand, if $h\in (J(f):f)$, then there exist $A,B\in\mathbb{C}\{x,y\}$ such that  $hf=Af_x+Bf_y$. Considering $\omega=Bdx-Ady\in\Omega^1$, we get $\omega\wedge
	df=hfdx\wedge dy$, that is, $\omega\in S(f)$. Since $\{\omega_1,\omega_2\}$ is a Saito basis for $S(f)$, there exist $g_1, g_2\in\mathbb{C}\{x,y\}$ such that $\omega=g_1\omega_1+g_2\omega_2$ and thus, $hfdx\wedge
	dy=\omega\wedge df=(g_1h_1+g_2h_2)fdX\wedge dY$, which implies,
	$h=g_1h_1+g_2h_2\in\langle h_1,h_2\rangle$.
\end{proof}

The Saito module $S(f)$ is related to analytical invariants of $\mathcal{C}_f$, as we now describe.

Let $\mathcal{O}_f:=\frac{\mathbb{C}\{x,y\}}{\langle f\rangle}$ be the local ring of $\mathcal{C}_f$. The K\"ahler differential module of $\mathcal{O}_f$ is the $\mathcal{O}_f$-module
$\Omega_f:=\frac{\Omega^1}{F(f)}$ and its torsion submodule is $$\mathcal{T}_f:=\{\overline{\omega}\in\Omega_f;\ \overline{g}\overline{\omega}=\overline{0}\ \mbox{for some regular element}\ \overline{g}\in\mathcal{O}_f\}.$$ Consequently, we get $\mathcal{T}_f=\frac{S(f)}{F(f)}$.

By \cite[Theorem 1.1]{dimca}, we have
\begin{equation}\label{tjurina-torsion}
	\tau(f):=\dim_{\mathbb{C}}\frac{\mathbb{C}\{x,y\}}{\langle f,f_x,f_y\rangle}=\dim_{\mathbb{C}}\mathcal{T}_f=\dim_{\mathbb{C}}\frac{S(f)}{F(f)},\end{equation}
where $\tau(f)$ is the Tjurina number of $\mathcal{C}_f$, which is an analytic invariant. 

Notice that $\tau(f)\leq\mu(f)$, where $\mu(f):=\dim_{\mathbb{C}}\frac{\mathbb{C}\{x,y\}}{J(f)}$ is the Milnor number of $\mathcal{C}_f$, with equality if and only if $\mathcal{C}_f$ is analytically equivalent to a curve defined by a quasi-homogeneous polynomial (see \cite{saito1}).

\begin{remark}\label{equi-zero}
According to Berger (see \cite{berger}), the curve $\mathcal{C}_f$ is regular, that is, the multiplicity of $f$ is $m(f)=1$ if and only if $\mathcal{T}_f=\{0\}$, or equivalently, $0=\tau(f)=\dim_{\mathbb{C}}\mathcal{T}_f=\dim_{\mathbb{C}}\frac{S(f)}{F(f)}$, that is, $f$ is regular if and only if $S(f)=F(f)$. Moreover, if $f=\alpha x+\beta y+h$ where $h\in\langle x,y\rangle^2$, $\alpha ,\beta\in\mathbb{C}$ with $\alpha\neq 0$ or $\beta\neq 0$, then taking $g=\gamma x+\delta y$ such that $\alpha\delta -\beta\gamma\neq 0$, that is $\mathcal{C}_f$ and $\mathcal{C}_g$ are transversal curves, we obtain that $\{fdg,df\}$ is a Saito basis for $S(f)$, since
$$df\wedge fdg=(\alpha\delta-\beta\gamma+\delta h_x-\gamma h_y)fdx\wedge dy$$ and $\alpha\delta-\beta\delta+\delta h_x-\gamma h_y\in\mathbb{C}\{x,y\}$ is a unit.
\end{remark}

The following result was proved in the global case by Michler in \cite{michler} and used in the local case in \cite{transaction}. Since we are not aware of an explicit proof of this result for the local case, we provide one below.

\begin{proposition}\label{tjurina}
Let $\{\omega_1,\omega_2\}$ be a Saito Basis for $S(f)$ and $h_i\in\mathbb{C}\{x,y\}$ be the cofactor of $\omega_i$ for $i=1,2$. 
Then the $\mathbb{C}$-vector spaces $\frac{(J(f):f)}{J(f)}$ and $\frac{S(f)}{F(f)}=\mathcal{T}_f$ are isomorphic. In particular, we have 
$$\tau(f)=\dim_{\mathbb{C}}\frac{(J(f):f)}{J(f)}\ \ \ \mbox{and}\ \ \ \mu(f)-\tau(f)=I(h_1,h_2),$$
where $I(h_1,h_2):=\dim_{\mathbb{C}}\frac{\mathbb{C}\{x,y\}}{\langle h_1,h_2\rangle}$ denotes the intersection multiplicity of $h_1$ and $h_2$.
\end{proposition}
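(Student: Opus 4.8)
The plan is to build an explicit $\mathbb{C}\{x,y\}$-linear \emph{cofactor map} $\phi\colon S(f)\to (J(f):f)$ sending $\omega$ to its cofactor $h$, characterized by $\omega\wedge df=hf\,dx\wedge dy$ (Remark \ref{remark-equi-saito}), and then to show that it descends to the asserted isomorphism. First I would check that $\phi$ is well defined and $\mathbb{C}\{x,y\}$-linear: since $\mathbb{C}\{x,y\}$ is a domain and $f\neq 0$, the cofactor $h$ is uniquely determined by $\omega$, and linearity is immediate from the bilinearity of $\wedge$. Surjectivity of $\phi$ onto $(J(f):f)$ is exactly the observation recorded just before Lemma \ref{genideal}: given $h\in(J(f):f)$, write $hf=Af_y-Bf_x$ and take $\omega=A\,dx+B\,dy\in S(f)$.

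The two computations at the heart of the argument are the kernel of $\phi$ and the image $\phi(F(f))$. For the kernel, $\phi(\omega)=0$ means $Af_y-Bf_x=0$; since $\mathcal{C}_f$ is reduced its singularity is isolated, so $\gcd(f_x,f_y)=1$ in the UFD $\mathbb{C}\{x,y\}$, and $Af_y=Bf_x$ forces $A=pf_x$, $B=pf_y$, i.e. $\omega=p\,df$. Thus $\ker\phi=\mathbb{C}\{x,y\}\,df$. For the image, writing $\eta=a\,dx+b\,dy$ gives $(p\,df+f\eta)\wedge df=f(af_y-bf_x)\,dx\wedge dy$, so $\phi(F(f))\subseteq J(f)$; conversely every $cf_x+df_y\in J(f)$ is realized by $\eta=d\,dx-c\,dy$, whence $\phi(F(f))=J(f)$.

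These two facts upgrade $\phi$ to the claimed isomorphism $\overline{\phi}\colon \frac{S(f)}{F(f)}\to\frac{(J(f):f)}{J(f)}$. Surjectivity is inherited from $\phi$. For injectivity I would show $\phi^{-1}(J(f))=F(f)$: if $\phi(\omega)\in J(f)$, then by the image computation there is $f\eta\in F(f)$ with $\phi(f\eta)=\phi(\omega)$, so $\omega-f\eta\in\ker\phi=\mathbb{C}\{x,y\}\,df$, giving $\omega=p\,df+f\eta\in F(f)$. Combining this isomorphism with (\ref{tjurina-torsion}) yields $\tau(f)=\dim_{\mathbb{C}}\frac{(J(f):f)}{J(f)}$.

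For the final equality I would invoke Lemma \ref{genideal}, which gives $(J(f):f)=\langle h_1,h_2\rangle$, together with the inclusions $J(f)\subseteq\langle h_1,h_2\rangle\subseteq\mathbb{C}\{x,y\}$. Additivity of $\mathbb{C}$-dimension along the short exact sequence $0\to\frac{\langle h_1,h_2\rangle}{J(f)}\to\frac{\mathbb{C}\{x,y\}}{J(f)}\to\frac{\mathbb{C}\{x,y\}}{\langle h_1,h_2\rangle}\to 0$ then reads $\mu(f)=\tau(f)+I(h_1,h_2)$, all terms being finite because the isolated singularity forces $\mu(f)<\infty$. I expect the only delicate point to be the two structural computations in the second paragraph --- in particular justifying $\gcd(f_x,f_y)=1$ from reducedness --- since everything downstream is formal once $\ker\phi$ and $\phi(F(f))$ are pinned down.
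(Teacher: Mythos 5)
Your proposal is correct and is essentially the paper's own argument run in the opposite direction: the paper builds the map $(J(f):f)\to \frac{S(f)}{F(f)}$, $h\mapsto\overline{A\,dx+B\,dy}$, whereas you build the cofactor map $S(f)\to (J(f):f)$, but in both cases the heart of the proof is the same computation --- $\omega\in F(f)$ if and only if its cofactor lies in $J(f)$, using $\gcd(f_x,f_y)=1$ --- followed by the identical finish via Lemma \ref{genideal} and additivity of dimension. The only difference your direction buys is that well-definedness is automatic (the cofactor of $\omega$ is unique), at the price of the explicit kernel computation $\ker\phi=\mathbb{C}\{x,y\}\,df$, which plays the role of the paper's check that the choice of $A,B$ does not matter.
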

\begin{proof}
Firstly, notice that $\omega=Adx+Bdy\in F(f)$ if and only if it cofactor belongs to the Jacobian ideal $J(f)$. Indeed, if $\omega=Adx+Bdy\in F(f)=\mathbb{C}\{x,y\}df+f\Omega^1$ then there exist $P, Q, R\in\mathbb{C}\{x,y\}$ such that $A=Rf_x+fP$ and $B=Rf_y+fQ$. In this way, $\omega\wedge df=(Pf_y-Qf_x)fdx\wedge dy$ and the cofactor  $Pf_y-Qf_x$ belongs to $J(f)$. On the other hand, if $(Af_y-Bf_x)dx\wedge dy=\omega\wedge df=hfdx\wedge dy$ with $h=Pf_y-Qf_x
\in J(f)$ then $(fP-A)f_y=(fQ-B)f_x$. Since $f_x$ and $f_y$ are coprime, there exists $R\in\mathbb{C}\{x,y\}$ such that $fP-A=Rf_x$ and $fQ-B=Rf_y$ and consequently, $\omega=Adx+Bdy=(fP-Rf_x)dx+(fQ-Rf_y)dy=-Rdf+f(Pdx-Qdy)\in F(f)$.	
	
Now consider 
$$\begin{array}{cccc}
\varphi: & (J(f):f) & \rightarrow & \frac{S(f)}{F(f)} \\
& h & \mapsto & \overline{Adx+Bdy},\end{array}$$	
where $hf=Af_y-Bf_x$.

Notice that $\varphi$ is well defined. Indeed, if $A_1f_y-B_1f_x=hf=A_2f_y-B_2f_x$ then $(A_1-A_2)f_y=(B_1-B_2)f_x$. Since $f_x$ and $f_y$ are coprime, there exists $R\in\mathbb{C}\{x,y\}$ such that
$A_1=A_2+Rf_x$ and $B_1=B_2+Rf_y$, consequently
$\varphi(A_1f_y-B_1f_x)=\overline{A_1dx+B_1dy}=\overline{A_2dx+B_2dy+Rdf}=\overline{A_2dx+B_2dy}=\varphi(A_2f_y-B_2f_x)$.

Clearly, $\varphi$ is $\mathbb{C}$-linear and, as we have remarked $\ker(\varphi)=J(f)$. In addition, given $\overline{\omega}\in\frac{S(f)}{F(f)}$ we consider $h=\frac{\omega\wedge df}{fdx\wedge dy}\in (J(f):f)$ and we have $\varphi(f)=\overline{\omega}$, that is, $\varphi$ is surjective. It follows that $\frac{(J(f):f)}{J(f)}$ and $\frac{S(f)}{F(f)}=\mathcal{T}_f$ are $\mathbb{C}$-isomorphic.

In this way, $$\tau(f)=\dim_{\mathbb{C}}\frac{S(f)}{F(f)}=\dim_{\mathbb{C}}\frac{(J(f):f)}{J(f)}.$$
In addition, by Lemma \ref{genideal}, we get
$$\tau(f)=\dim_{\mathbb{C}}\frac{(J(f):f)}{J(f)}=\dim_{\mathbb{C}}\frac{\mathbb{C}\{x,y\}}{J(f)}-\dim_{\mathbb{C}}\frac{\mathbb{C}\{x,y\}}{(J(f):f)}=\dim_{\mathbb{C}}\frac{\mathbb{C}\{x,y\}}{J(f)}-\dim_{\mathbb{C}}\frac{\mathbb{C}\{x,y\}}{\langle h_1,h_2\rangle}.$$ 
Hence, $\tau(f)=\mu(f)-I(h_1,h_2)$.
\end{proof}

In Remark \ref{equi-zero}, we present an equivalent condition for $\mathcal{T}_f=\{0\}$ in terms of the Saito module. The following result provides equivalent conditions for $\mathcal{T}_f$ to be a nontrivial cyclic module. 

\begin{theorem}\label{theo-mu-tau}
Let $f\in\mathbb{C}\{x,y\}$ with $m(f)>1$. The following statements are equivalent:
\begin{enumerate}
	\item $\tau(f)=\mu(f)$.
	\item There exists a Saito basis $\{\omega ,df\}$ for $S(f)$.
	\item $\mathcal{T}_f$ is a nontrivial cyclic $\mathcal{O}_f$-module.
\end{enumerate}
\end{theorem}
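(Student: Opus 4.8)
The plan is to prove the cycle of implications $(1)\Rightarrow(2)\Rightarrow(3)\Rightarrow(1)$, using Proposition \ref{tjurina}, Lemma \ref{genideal} and Saito's criterion for the first two implications, and reserving the genuinely geometric input for the last one.

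For $(1)\Rightarrow(2)$: by Proposition \ref{tjurina} and Lemma \ref{genideal}, $\tau(f)=\mu(f)$ is equivalent to $I(h_1,h_2)=0$, i.e. to $(J(f):f)=\langle h_1,h_2\rangle=\mathbb{C}\{x,y\}$. In particular $1\in(J(f):f)$, so $f\in J(f)$ and we may write $f=af_x+bf_y$ with $a,b\in\mathbb{C}\{x,y\}$. Setting $\omega=b\,dx-a\,dy$, a direct computation gives $\omega\wedge df=(af_x+bf_y)\,dx\wedge dy=f\,dx\wedge dy$, so the cofactor of $\omega$ is the unit $1$ and by Saito's criterion $\{\omega,df\}$ is a Saito basis. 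For $(2)\Rightarrow(3)$: since $df=1\cdot df+f\cdot 0\in F(f)$, its class in $\mathcal{T}_f=S(f)/F(f)$ is zero; hence, if $\{\omega,df\}$ is a Saito basis then $S(f)=\mathbb{C}\{x,y\}\omega+\mathbb{C}\{x,y\}\,df$ and $\mathcal{T}_f$ is generated as an $\mathcal{O}_f$-module by the single class $\overline{\omega}$. It is nontrivial because $m(f)>1$ (Remark \ref{equi-zero}), so $\mathcal{T}_f$ is a nontrivial cyclic $\mathcal{O}_f$-module.

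For $(3)\Rightarrow(1)$, I would argue by contrapositive and count minimal generators via Nakayama's Lemma. Writing $\mathfrak{m}=\langle x,y\rangle$, one has an isomorphism of $\mathbb{C}$-vector spaces
$$\frac{\mathcal{T}_f}{\mathfrak{m}\,\mathcal{T}_f}\cong\frac{S(f)}{\mathfrak{m}\,S(f)+F(f)},$$
a quotient of $S(f)/\mathfrak{m}S(f)\cong\mathbb{C}^2$, since $S(f)$ is free of rank $2$. Fixing a Saito basis $\{\omega_1,\omega_2\}$ with $\omega_i=A_idx+B_idy$, cofactors $h_i$, and $\omega_1\wedge\omega_2=uf\,dx\wedge dy$ for a unit $u$, the module $F(f)$ is generated over $\mathbb{C}\{x,y\}$ by $df$, $f\,dx$ and $f\,dy$, and wedging these against $\omega_1,\omega_2$ (by the computation underlying Lemma \ref{aux}) expresses them in the basis as
$$df=u^{-1}(h_1\omega_2-h_2\omega_1),\quad f\,dx=u^{-1}(B_2\omega_1-B_1\omega_2),\quad f\,dy=u^{-1}(A_1\omega_2-A_2\omega_1).$$
Now assume $\tau(f)<\mu(f)$. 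Then $(J(f):f)=\langle h_1,h_2\rangle\subsetneq\mathbb{C}\{x,y\}$ is a proper ideal, so $h_1,h_2\in\mathfrak{m}$ and the first identity shows $df\in\mathfrak{m}S(f)$.

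The remaining, and crucial, point is the \emph{Key Claim}: for a singular curve ($m(f)>1$) every element of the Saito module vanishes at the origin, i.e. $S(f)\subseteq\mathfrak{m}\Omega^1$, equivalently $A_i,B_i\in\mathfrak{m}$. Granting this, the two remaining identities give $f\,dx,f\,dy\in\mathfrak{m}S(f)$ as well, so $F(f)\subseteq\mathfrak{m}S(f)$, whence $\mathcal{T}_f/\mathfrak{m}\mathcal{T}_f\cong\mathbb{C}^2$; by Nakayama $\mathcal{T}_f$ then needs two generators and cannot be cyclic, contradicting $(3)$. I expect the Key Claim to be the main obstacle. The cleanest justification is geometric: each $\omega_i$ defines a holomorphic foliation leaving $\mathcal{C}_f$ invariant, and a nonsingular foliation (one with $\omega_i(0)\neq 0$) admits through the origin a unique, smooth invariant curve germ, which the singular reduced curve $\mathcal{C}_f$ cannot be, forcing $\omega_i(0)=0$. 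Algebraically, the same follows by comparing homogeneous parts in $A_if_y-B_if_x=h_if$: the lowest-degree part forces $A_i(0)(f_m)_y=B_i(0)(f_m)_x$ for the leading form $f_m$ of $f$, and pushing the comparison to higher order (using that $f$ is reduced with $m(f)>1$) rules out $\big(A_i(0),B_i(0)\big)\neq(0,0)$. Making this last argument rigorous, rather than the bookkeeping in the three displayed identities, is where the real work lies.
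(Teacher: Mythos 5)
Your proposal is correct, and its first two implications coincide with the paper's argument: $(1)\Rightarrow(2)$ is exactly the paper's combination of Proposition \ref{tjurina} with Saito's criterion, and $(2)\Rightarrow(3)$ is identical (including the appeal to Remark \ref{equi-zero} for nontriviality). Where you genuinely diverge is the closing implication. The paper proves $(3)\Rightarrow(2)$ directly: it completes the cyclic generator $\overline{\omega}$ to a Saito basis $\{\omega,\omega_1\}$ with $\omega_1=h_1df+f(Cdx+Ddy)\in F(f)$ (possible because $S(f)=\mathbb{C}\{x,y\}\omega+F(f)$ and $S(f)$ is free of rank $2$), computes $u=hh_1+AD-BC$ from Saito's criterion, and deduces that the cofactor $h$ of $\omega$ is a unit, so $\{\omega,df\}$ is itself a Saito basis. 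You instead prove $(3)\Rightarrow(1)$ by contrapositive: if $\tau(f)<\mu(f)$ then $h_1,h_2\in\mathfrak{m}$ by Lemma \ref{genideal}, your three change-of-basis identities give $F(f)\subseteq\mathfrak{m}S(f)$, hence $\mathcal{T}_f/\mathfrak{m}\mathcal{T}_f\cong S(f)/\mathfrak{m}S(f)\cong\mathbb{C}^2$ and Nakayama forbids cyclicity; the identities themselves are correct (they are instances of the paper's Proposition following Lemma \ref{aux}). Both routes pivot on the very same key fact, namely that coefficients of elements of $S(f)$ lie in $\mathfrak{m}$ when $m(f)>1$: the paper asserts this in a single clause (``Since $A$ and $B$ are not units'') with no justification, whereas you isolate it as the Key Claim and give a correct geometric proof --- a $1$-form with $\omega(0)\neq 0$ is automatically saturated and defines a nonsingular foliation, whose unique invariant curve germ through the origin is smooth, incompatible with $\mathcal{C}_f$ being reduced and singular. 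Your worry about the algebraic alternative is well founded but harmless: comparing lowest-degree terms in $A_if_y-B_if_x=h_if$ only forces the tangent cone of $f$ to be a single line (for $f=y^2-x^3$ it forces $A_i(0)=0$ but leaves $B_i(0)$ free), so that sketch is indeed incomplete, yet it is not needed once the foliation argument is in place. On balance, the paper's route is shorter and stays inside Saito-criterion bookkeeping; yours costs the three displayed identities but buys a sharper conclusion (when $m(f)>1$ and $\tau(f)<\mu(f)$, the torsion module $\mathcal{T}_f$ needs exactly two generators) and has the merit of making explicit, and actually proving, the step the paper glosses over.
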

\begin{proof}
($1\Leftrightarrow 2$) According to Proposition \ref{tjurina}, the condition $\tau(f)=\mu(f)$ is equivalent to the cofactor ideal of $S(f)$ being equal to $\mathbb{C}\{x,y\}$ that is, there exists $\omega\in S(f)$ such that $\omega\wedge df=ufdx\wedge dy$ with $u\in\mathbb{C}\{x,y\}$ a unit. By the Saito criterion, $\{\omega, df\}$ is a Saito basis for $S(f)$.

($2\Rightarrow 3$) If $\{\omega ,df\}$ is a Saito basis for $S(f)$, then $\{\overline{\omega}\}$ is a generating set for $\mathcal{T}_f=\frac{S(f)}{F(f)}$ with $\overline{\omega}\neq\overline{0}$, since otherwise we would have $S(f)=F(f)$, contradicting the assumption that $m(f)>1$ (see Remark \ref{equi-zero}).

($3\Rightarrow 2$) If $\mathcal{T}_f=\frac{S(f)}{F(f)}$ is cyclic and generated by $\{\overline{\omega}\}$ with $\omega=Adx+Bdy\not\in F(f)$, then there exists $\omega_1=h_1df+f(Cdx+Ddy)\in F(f)$, such that $\{\omega, \omega_1\}$ is a Saito basis for $S(f)$. By the Saito criterion, there are $h\in\mathbb{C}\{x,y\}$ and a unit $u\in\mathbb{C}\{x,y\}$ such that $\omega\wedge df=hfdx\wedge dy$ and $$ufdx\wedge dy=\omega\wedge\omega_1=f(hh_1+AD-BC)dx\wedge dy,$$ so $u=hh_1+AD-BC$. Since $A$ and $B$ are not units, the term $A D - B C$ is not a unit either, implying that $h h_1$ must be a unit. Therefore, $h$ is a unit and the set $\{\omega ,df\}$ satisfies the Saito criterion, that is, it is a Saito basis for $S(f)$.
\end{proof}

\begin{example}\label{m-2} Let us consider $f=y^2+A(x)y+B(x)\in\mathbb{C}\{x\}[y]$ with $A,B\in\mathbb{C}\{x\}$ and $m(f)=2$, that is $mult(A)\geq 1$ and $mult(B)\geq 2$. Since $m(f)=2$, we get $\tau(f)=\mu(f)$ and $\mathcal{C}_f$ is analytically equivalent to a curve defined by a quasi-homogeneous polynomial. Indeed, setting $z=y+\frac{A}{2}$ and $w=\left ( \frac{A^2}{4}-B\right )^\frac{1}{k}$, where $$\frac{A^2}{4}-B=x^k u(x)$$ with $k\geq 2$ and $u(x)\in\mathbb{C}\{x\}$ a unit, we get
	$f(x,y)=g(w,z)=z^2-w^k.$
By Example \ref{qh}, $\{2wdz-kzdw, dg=2zdz-kw^{k-1}dw\}$ is a Saito basis for $S(g)$. Consequently, writing $A'=\frac{dA}{dx}$ and $B'=\frac{dB}{dx}$, we obtain: 
$$\begin{array}{ll}
	\omega& =2wdz-kzdw= 2\left ( \frac{A^2}{4}-B\right )^{\frac{1}{k}}d\left (y+\frac{A}{2}\right )-k\left (y+\frac{A}{2}\right )d\left ( \left (\frac{A^2}{4}-B\right )^{\frac{1}{k}}\right )\\
	& =2\left ( \frac{A^2}{4}-B\right )^{\frac{1}{k}}dy+\left (\frac{A^2}{4}-B\right )^{\frac{1}{k}}\left ( A'-\left ( y+\frac{A}{2}\right )\left ( \frac{AA'}{2}-B'\right )\left (\frac{A^2}{4}-B\right )^{-1}\right )dx\\
	& =
	u^{\frac{1}{k}}(x)\left (2xdy-\left ( \left (k+\frac{xu'(x)}{u(x)}\right )\left ( y+\frac{A}{2}\right)-xA'\right )dx \right ),\end{array}$$
where the last equality follows because $\frac{A^2}{4}-B=x^k u(x)$ with $k\geq 2$ and
\begin{equation}\label{conta}\left ( \frac{A^2}{4}-B\right )^{\frac{1}{k}-1}\left ( \frac{AA'}{2}-B'\right )=(x^ku(x))^{\frac{1-k}{k}}(kx^{k-1}u(x)+x^ku'(x))=u(x)^{\frac{1}{k}}\left (k+\frac{xu'(x)}{u(x)}\right ).\end{equation}
Define $\omega_1=u^{-\frac{1}{k}}(x)\omega=2xdy-\left ( \left (k+\frac{xu'(x)}{u(x)}\right )\left ( y+\frac{A}{2}\right)-xA'\right )dx$	
and
$$\begin{array}{ll}
	\omega_2 & =df =2\left ( y+\frac{A}{2}\right )\left ( dy+\frac{A'}{2}dx\right )-k\left ( \frac{A^2}{4}-B\right )^{\frac{k-1}{k}}\left ( \frac{1}{k}\left ( \frac{A^2}{4}-B\right )^{\frac{1-k}{k}}\left ( \frac{AA'}{2}-B'\right )\right )dx \\
	& = (2y+A)dy+(A'y+B')dx.
\end{array}$$	
We have that
$$\omega_1\wedge\omega_2 =-2\left ( \left (y+\frac{A}{2}\right )^2\left ( k+\frac{xu'(x)}{u(x)}\right )-x\left ( \frac{AA'}{2}-B'\right )\right )dx\wedge dy= -2 \left ( k+\frac{xu'(x)}{u(x)}\right )fdx\wedge dy
$$	where the last equality follows by (\ref{conta}).
Consequently, $\{\omega_1,\omega_2\}$ is a Saito Basis for $S(f)$.
\end{example}

Besides the Tjurina number, other analytic invariants can also be obtained via the Saito module, as we describe below.

Let $f = \prod_{i=1}^{r}f_i$, where  $f_i\in\mathbb{C}\{x,y\}$ is irreducible and $\langle f_i\rangle\neq\langle f_j\rangle$ for $i\neq j$. Each $f_i$ defines an irreducible plane curve (a branch) $\mathcal{C}_i$ that admits a primitive\footnote{A parameterization is called primitive if all the exponents have no common factor greater than one.} parametrization $\varphi_i(t_i) = (x_i, y_i)\in\mathbb{C}\{t_i\}\times\mathbb{C}\{t_i\}$.

For any $h\in\mathbb{C}\{x,y\}$, we denote by $\varphi_i^*(h)=h(x_i,y_i)\in\mathbb{C}\{t_i\}$, and obtain the exact sequence
$$0\rightarrow \langle f_i\rangle \rightarrow \mathbb{C}\{x,y\}\stackrel{\varphi_i^*}{\rightarrow}\mathbb{C}\{x_i,y_i\}\rightarrow 0.$$
Consequently, $\mathcal{O}_i:=\frac{\mathbb{C}\{x,y\}}{\langle f_i\rangle}\cong \mathbb{C}\{x_i,y_i\}\subseteq\mathbb{C}\{t_i\}$. In this way, the field of fraction of $\mathcal{O}_i$ is $\mathcal{K}_i\cong \mathbb{C}(t_i)$ and we have a natural discrete valuation
$$\begin{array}{cccc}
\nu_i: & \mathcal{K}_i & \rightarrow & \overline{\mathbb{Z}}:=\mathbb{Z}\cup \{\infty\} \\
 & \frac{p}{q} & \mapsto & ord_{t_i}(p)-ord_{t_i}(q),\end{array}$$
 where $\nu_i(0)=\infty$.
In particular, $\Gamma_i:=\nu_i(\mathcal{O}_i)\subseteq\mathbb{N}\cup\{\infty\}$ is the semigroup associated with the branch $\mathcal{C}_i$ and it is a complete topological invariant of $\mathcal{C}_i$.

Since the total ring of fraction of $\mathcal{O}_f$ is $\mathcal{K}_f=\bigoplus_{i=1}^{r}\mathcal{K}_i\cong\bigoplus_{i=1}^{r}\mathbb{C}(t_i)$ we can consider
 $$\begin{array}{cccc}
 	\nu_f: & \mathcal{K}_f & \rightarrow & \overline{\mathbb{Z}}^r \\
 	& \frac{p}{q} & \mapsto & \left ( \ldots ,\nu_i\left( \pi_i\left  (\frac{p}{q} \right )\right ),\ldots \right ),\end{array}$$
where $\pi_i:\mathcal{K}_f\rightarrow \mathcal{K}_i$ is the natural projection.

As before, we define $\Gamma(f):=\nu_f(\mathcal{O}_f)$ as the value semigroup of $\mathcal{C}_f$. This semigroup determines, and it is determined by, the topological class of $\mathcal{C}_f$ (see \cite{Delgado}, \cite{Waldi} and \cite{Z-top}). In contrast to $\Gamma_i$, the set $\Gamma(f)$ is not a finitely generated semigroup, but $(\Gamma(f), \inf ,+)$, that is, equiped with the tropical operations, it is a finitely generated semiring\footnote{$(\Gamma(f),\inf +)$ is a semiring means that $(\Gamma(f), \inf)$ and $(\Gamma(f),+)$ are monoids with identity elements $(\infty,\ldots ,\infty)$ and $(0,\ldots ,0)$ respectively; $\inf\{\alpha + \beta,\alpha +\gamma\}=\alpha+\inf\{\beta,\gamma\}$ and $\infty +\alpha =\infty$ for every $\alpha, \beta, \gamma\in\Gamma(f)$.} (see \cite{carvalho1}).

Considering the canonical epimorphism $\rho:\Omega_f\rightarrow\frac{\Omega_f}{\mathcal{T}_f}$ and the $\mathcal{O}_f$-monomorphism (see Section 2 in \cite{BGHH}) $\psi:\frac{\Omega_f}{\mathcal{T}_f}\rightarrow\bigoplus_{i=1}^{n}\mathbb{C}\{t_i\}\subset\mathcal{K}_f$  defined by
\begin{equation}\label{psi} \psi(\overline{Adx+Bdy})=\left (\ldots ,t_i(\varphi_i^*(A)x'_i+\varphi_i^*(B)y'_i),\ldots \right )\end{equation}
we have 
$$\Lambda(f):=\{\nu_f(\omega):=\nu_f(\psi\circ\rho(\omega));\ \omega\in\Omega_f\}$$
the values set of the K\"ahler differentials. The set $\Lambda(f)$ satisfies $\Gamma(f)\setminus\{(0,\ldots ,0)\}\subseteq\Lambda(f)$ and $\Gamma(f)+\Lambda(f)\subseteq\Lambda(f)$. In addition, $\Lambda(f)$ is an analytic invariant of $\mathcal{C}_f$ and it plays a central role in the solution of the analytic classification problem for plane curves as presented in \cite{HR}.

On the other hand, given $\omega\in S(f)$ such that $g\omega =kdf+f\eta$ with $g, k\in\mathbb{C}\{x,y\}$, $gcd(g,f)=1$ and $\eta\in\Omega^1$, Saito (see \cite{saito}) shows that $res_f(\omega):=\overline{\frac{k}{g}}\in\mathcal{K}_f$ is well defined, where the bar denotes the class in $\mathcal{O}_f$, and
$$\mathcal{R}(f):=\left \{ res_f(\omega);\ \omega\in S(f)\right \}$$
is an $\mathcal{O}_f$-submodule of $\mathcal{K}_f$ generated by $res_f(\omega_1)$ and $res_f(\omega_2)$, where $\{\omega_1,\omega_2\}$ is a Saito basis for  $S(f)$.

Pol, in \cite[Corollary 3.32]{Pol}, shows that 
$$\Delta(f):=\{\nu_f(res_f(\omega));\ \omega\in S(f)\}\subseteq \overline{\mathbb{Z}}^r$$ determines and is determined by $\Lambda(f)$. Consequently, it is also an analytic invariant of $\mathcal{C}_f$.

\begin{remark}
Notice that if $\omega=Adx+Bdy\in S(f)$ then $(Af_y-Bf_x)dx\wedge dy=\omega\wedge df=hfdx\wedge dy$. So,
$$f_x\omega=A(df-f_ydy)+Bf_xdy=Adf-hfdy\ \ \mbox{and}\ \ f_y\omega=Af_ydx+B(df-f_xdx)=Bdf+hfdx,$$ that is,
$res(\omega)=\overline{\frac{A}{f_x}}=\overline{\frac{B}{f_y}}$ and \begin{equation}\label{GSV}
\nu_f(res(\omega))=\nu_f(A)-\nu_f(f_x)=\nu_f(B)-\nu_f(f_y).\end{equation}
\end{remark}

If $\omega=Adx+Bdy\in S(f)$ satisfies $gcd(A,B)=1$, then the $1$-form  $\omega$ defines a holomorphic foliation $\mathcal{F}_{\omega}$ in $(\mathbb{C}^2,0)$ and the condition $\omega\wedge df=hfdx\wedge dy$ implies that $\mathcal{C}_f$ is an invariant curve (a separatrix) of $\mathcal{F}_{\omega}$ passing through  $0\in\mathbb{C}^2$.

\begin{remark}\label{foliation-irred}
	If $f\in\mathbb{C}\{x,y\}$ is irreducible with $m(f)>1$ and $\{\omega_1=A_1dx+B_1dy,\omega_2=A_2dx+B_2dy\}$ is a Saito basis for $S(f)$, then  $\gcd(A_i,B_i)=1$.
	
	Indeed, if $\gcd(A_1,B_1)=g\in\langle x,y\rangle$, then $\omega_1=g\eta_1$ and, by Saito's criterion, there exists a unit $u\in\mathbb{C}\{x,y\}$ such that $ufdx\wedge dy=\omega_1\wedge\omega_2=g\eta_1\wedge\omega_2$. Since $f$ is irreducible and $g\in\langle x,y\rangle$, we get $g=vf$ for some unit $v\in\mathbb{C}\{x,y\}$. Thus, $\omega_1=g\eta_1=vf\eta_1\in F(f)$, so the torsion submodule $\mathcal{T}_f=\{0\}$ or it is cyclic. If $\mathcal{T}_f=\{0\}$, then by Remark \ref{equi-zero} we get $m(f)=0$, that is a contradiction. If $\mathcal{T}_f$ is cyclic, then by Theorem \ref{theo-mu-tau}, $\{\omega_1,df\}=\{f\eta_1,df\}$ would be a Saito basis for $S(f)$, implying again that $\mathcal{T}_f=\{0\}$, which is an absurd.	
\end{remark}

In what follows, in this section, we will assume that  $f\in\mathbb{C}\{x,y\}$ is irreducible. 

By the previous remark, given a Saito basis $\{\omega_1,\omega_2\}$ for $S(f)$, each $1$-form  $\omega_i$ defines a holomorphic foliation $\mathcal{F}_i$ in $(\mathbb{C}^2,0)$ for $i=1,2$, and some authors have studied the generators of $S(f)$ from this perspective (see \cite{felipe-nuria-david}, \cite{yohann}, \cite{yohann-marcelo}, etc.). 

Let us present some properties of $S(f)$ and the foliation $\mathcal{F}_i$ defined by $\omega_i$ for $i=1,2$. 

If $\{\omega_1=A_1dx+B_1dy,\omega_2=A_2dx+B_2dy\}$ is a Saito basis for $S(f)$, then according to (\ref{GSV}), the GSV-index $GSV(\mathcal{F}_i,\mathcal{C}_f,0)$ (see \cite{brunela} for details) of $\mathcal{F}_i$ with respect to $\mathcal{C}_f$ at the origin coincides with the value of $res_f(\omega_i)$, that is,
\begin{equation}\label{gsv}GSV(\mathcal{F}_i,\mathcal{C}_f,0)=\nu_f(res_f(\omega_i)).\end{equation}

The GSV-index of a foliation $\mathcal{F}$ provides some of its geometric properties. For instance, by \cite[Proposition 6]{brunela}, if $\mathcal{C}$ is a separatrix of $\mathcal{F}$ and $GSV(\mathcal{F},\mathcal{C},0)<0$, then $\mathcal{F}$ is dicritical, that is, it admits infinitely many separatrices through $0\in\mathbb{C}^2$.

The next proposition gives us an intrinsic property of a Saito basis from the Foliation Theory view point.  It was firstly obtained by Genzmer (see \cite{yohann} and \cite{yohann2}), but here we present an alternative proof.

\begin{proposition}
Let $f\in\mathbb{C}\{x,y\}$ be irreducible with $m(f)>1$. Given any Saito basis $\{\omega_1=A_1dx+B_2dy,\omega_2=A_2dx+B_2dy\}$ for $S(f)$, we have $\nu_f(B_1)=\min\{\nu_f(B_1),\nu_f(B_2)\}<\nu_f(f_y)$. In particular, $\omega_1$ defines a dicritical foliation $\mathcal{F}_1$ at the origin.
\end{proposition}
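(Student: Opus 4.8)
The plan is to reduce the statement to the single inequality $\nu_f(B_1)<\nu_f(f_y)$ and to read off the dicriticity from the GSV-index. By (\ref{GSV}) and (\ref{gsv}) we have $GSV(\mathcal{F}_1,\mathcal{C}_f,0)=\nu_f(res_f(\omega_1))=\nu_f(B_1)-\nu_f(f_y)$, so once $\nu_f(B_1)<\nu_f(f_y)$ is established we get $GSV(\mathcal{F}_1,\mathcal{C}_f,0)<0$, and \cite[Proposition 6]{brunela} yields the \emph{in particular}. Since $res_f(\omega_1),res_f(\omega_2)$ generate $\mathcal{R}(f)$ over $\mathcal{O}_f$ and a value of a sum is at least the minimum of the values, for any Saito basis one has $\min\{\nu_f(B_1),\nu_f(B_2)\}-\nu_f(f_y)=\min\Delta(f)$. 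This quantity is therefore basis-independent, so after relabelling so that $\nu_f(B_1)\le\nu_f(B_2)$ it suffices to prove $\min\Delta(f)<0$, i.e. $\min\{\nu_f(B_1),\nu_f(B_2)\}<\nu_f(f_y)$, and then $\nu_f(B_1)=\min\{\nu_f(B_1),\nu_f(B_2)\}$ by construction.

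First I would record the easy bound and then split according to $\tau(f)$ versus $\mu(f)$. Writing $df=M_1\omega_1+M_2\omega_2$ and comparing $dy$-coefficients gives $f_y=M_1B_1+M_2B_2$, whence $\nu_f(f_y)\ge\min\{\nu_f(B_1),\nu_f(B_2)\}$; the whole point is to make this strict. Assume first $\tau(f)<\mu(f)$. By Proposition \ref{tjurina} this means $(J(f):f)=\langle h_1,h_2\rangle\subsetneq\mathbb{C}\{x,y\}$, so in the local ring $\mathbb{C}\{x,y\}$ neither generator can be a unit, i.e. $h_1,h_2\in\langle x,y\rangle$ and hence $\nu_f(h_i)\ge 1$. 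Lemma \ref{aux} together with Saito's criterion gives $uf_y=B_2h_1-B_1h_2$ with $u$ a unit, and taking $\nu_f$ of both sides, $\nu_f(f_y)=\nu_f(B_2h_1-B_1h_2)\ge\min\{\nu_f(B_2)+\nu_f(h_1),\ \nu_f(B_1)+\nu_f(h_2)\}\ge\min\{\nu_f(B_1),\nu_f(B_2)\}+1$, which is exactly the desired strict inequality.

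The remaining case $\tau(f)=\mu(f)$ is the main obstacle, and it cannot be handled by the cofactor estimate above: by Theorem \ref{theo-mu-tau} there is a Saito basis $\{\omega,df\}$, and in fact \emph{every} Saito basis now has a cofactor that is a unit, because the $h_i$ generate the unit ideal $(J(f):f)=\mathbb{C}\{x,y\}$ and so cannot both lie in $\langle x,y\rangle$; thus the term carrying the information about the smaller $\nu_f(B_i)$ simply drops out of $uf_y=B_2h_1-B_1h_2$. Here I would instead use that $\min\Delta(f)$ is an analytic invariant (\cite{Pol}) together with Saito's theorem (\cite{saito1}): $\tau(f)=\mu(f)$ forces $\mathcal{C}_f$ to be analytically equivalent to a curve $\mathcal{C}_g$ defined by a quasi-homogeneous $g$ of weights $(n_1,n_2)$ and degree $d$, so $\min\Delta(f)=\min\Delta(g)$. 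For such $g$ (irreducible with $m(g)>1$, which one may normalise to $g=y^a-x^b$ with $\gcd(a,b)=1$ and $a,b\ge 2$) Example \ref{qh} provides the explicit basis $\{n_1xdy-n_2ydx,\,dg\}$, whose residues have values $\nu_g(x)-\nu_g(g_y)=n_1+n_2-d$ and $0$. Hence $\min\Delta(g)=n_1+n_2-d=-(a-1)(b-1)+1\le -1<0$, using $n_1=a$, $n_2=b$, $d=ab$.

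The delicate points are thus concentrated entirely in the quasi-homogeneous case: transporting $\Delta$ through the analytic equivalence (legitimate precisely because $\Delta(f)$ is an analytic invariant) and verifying the numerical inequality $d>n_1+n_2$ for every irreducible singular quasi-homogeneous model. Everything else — the reduction through the GSV-index, the basis-independence of $\min\Delta(f)$, and the cofactor estimate when $\tau(f)<\mu(f)$ — is routine once Lemma \ref{aux}, Proposition \ref{tjurina} and Theorem \ref{theo-mu-tau} are available.
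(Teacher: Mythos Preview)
Your argument is correct but takes a genuinely different route from the paper. The paper's proof is essentially a one-liner: after identifying $\nu_f(res_f(\omega_1))=\min\Delta(f)$ exactly as you do, it simply invokes \cite[Prop.~3.21]{Pol}, which gives the uniform bound $\min\Delta(f)\le -m(f)+1<0$, and then applies \cite[Proposition~6]{brunela}. You instead bypass that external bound by splitting on $\tau(f)$ versus $\mu(f)$: for $\tau(f)<\mu(f)$ you combine Lemma~\ref{aux} with Proposition~\ref{tjurina} to force both cofactors into the maximal ideal and read off the strict inequality from $uf_y=B_2h_1-B_1h_2$; for $\tau(f)=\mu(f)$ you pass, via Saito's theorem and the analytic invariance of $\Delta$, to the monomial model $y^a-x^b$ and compute $\min\Delta$ explicitly from the basis of Example~\ref{qh}. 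The paper's route is shorter and treats both cases uniformly; yours is longer but stays almost entirely within the paper's own toolbox, and incidentally in your first case one actually has $\nu_f(h_i)\ge m(f)$ (since any non-unit has value at least $v_0=m(f)$), so your cofactor estimate in fact yields $\min\Delta(f)\le -m(f)$ there.
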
 
\begin{proof}
Given any Saito basis $\{\omega_1=A_1dx+B_1dy,\omega_2=A_2dx+B_2dy\}$, we may assume that $\nu_f(B_1)=\min\{\nu_f(B_1),\nu_f(B_2)\}$. Then, by (\ref{GSV}), we get $\nu_f(res_f(\omega_1))=\min\{\nu(res_f(\omega));\ \omega\in S(f)\}=\min\Delta(f)$. By \cite[Prop. 3.21]{Pol}, we have $\min\Delta(f)\leq -m(f)+1$, and since $m(f)>1$, it follows from (\ref{gsv}) that
$GSV(\mathcal{F}_1,\mathcal{C}_f,0)=\nu_f(res_f(\omega_1))<0$, or equivalently, $\nu_f(B_1)<\nu_f(f_y)$. Hence, by \cite[Proposition 6]{brunela},  we conclude that $\mathcal{F}_1$ is a dicritical foliation.
\end{proof}

\section{Computing a Saito Basis for $S(f)$}

In the previous section, we presented some properties of a Saito basis for \(S(f)\) and its relation to  analytic invariants of \(\mathcal{C}_f\). In Example \ref{qh} and Remark \ref{equi-zero}, we provide a Saito bases for some particular cases. 

Computing a Saito basis is not an easy task. Several algorithms are available to compute it, as shown in \cite{tajima1}, \cite{tajima2}, \cite{walcher}, among others. Since \(S(f)\) is closely related to the torsion submodule \(\mathcal{T}_f\) of \(\Omega_f\), i.e., \(\mathcal{T}_f=\frac{S(f)}{F(f)}\), we will present a method for computing a Saito basis for \(S(f)\) by using a standard basis for \(\frac{\Omega_f}{\mathcal{T}_f}\). Algorithms for computing a standard basis for \(\Omega_f\) are presented in \cite{carvalho} and \cite{basestandard}. For the reader's convenience, we outline the main ingredients of these algorithms. 

To simplify the presentation, we assume that \(f\in\mathbb{C}\{x,y\}\) is irreducible with $m(f)>1$.

Since $\Omega_f=\frac{\Omega^1}{F(f)}$ and $\mathcal{T}_f=\frac{S(f)}{F(f)}$, we get 
$\frac{\Omega_f}{\mathcal{T}_f}\cong\frac{\Omega^1}{S(f)}$.
Considering the natural $\mathcal{O}_f$-module structue we have the homomorphisms
$$F(f)\ \hookrightarrow\ S(f)\ \hookrightarrow\ \Omega^1\ \stackrel{\sigma}{\twoheadrightarrow}\  \frac{\Omega^1}{F(f)}=\Omega_f\ \stackrel{\rho}{\twoheadrightarrow}\  \frac{\Omega^1}{S(f)}\cong\frac{\Omega_f}{\mathcal{T}_f} \ \stackrel{\psi}{\hookrightarrow}\ \mathcal{K}_f$$
where $\sigma$ and $\rho$ are the canonical epimorphisms and $\psi$ is the monomorphism defined in (\ref{psi}).

Since $f\in\mathbb{C}\{x,y\}$ is irreducible, we have that $\Gamma(f)=\nu_f(\mathcal{O}_f)$ is a numerical semigroup, that is, it admits a minimal finite set of generators $0<v_0<v_1\ldots <v_g$ such that $\Gamma(f)=v_0\mathbb{N}+\ldots +v_g\mathbb{N}$, and there exists $c=\min\{\gamma\in\Gamma(f);\ \gamma+\mathbb{N}\subset\Gamma(f)\}$, which coincides with the Milnor number $\mu(f)$. 

A {\bf Standard basis} for $\frac{\Omega^1}{S(f)}$ is a finite set $\mathcal{B}\subset\frac{\Omega^1}{S(f)}$ such that for any $0\neq\overline{\eta}\in\frac{\Omega^1}{S(f)}$, there exist $\overline{\omega}\in \mathcal{B}$ and $\overline{h}\in\mathcal{O}_f$ such that $\nu_f(\overline{\eta})=\nu_f(\overline{h\omega})=\nu_f(\overline{h})+\nu_f(\overline{\omega})$, where $\overline{\omega}$ denotes the class of $\omega\in\Omega^1$ in $\frac{\Omega^1}{S(f)}$ and $\overline{h}$ is the class of $h\in\mathbb{C}\{x,y\}$ in $\mathcal{O}_f=\frac{\mathbb{C}\{x,y\}}{\langle f\rangle}$. 
In particular, we have $$\Lambda(f)=\bigcup_{\overline{\omega}\in \mathcal{B}}\left (\nu_f(\overline{\omega})+\Gamma(f)\right ).$$

Let $\varphi(t)=(x(t),y(t))\in\mathbb{C}\{t\}\times\mathbb{C}\{t\}$ be a primitive parametrization for $\mathcal{C}_f$. Up to a change of analytic coordinates, we may assume $\nu_f(\varphi^*(x))=v_0<v_1=\nu_f(\varphi^*(y))$ with $m(f)=v_0<v_1$ and $v_0\nmid v_1$. In addition, we set $f_i\in\mathbb{C}\{x,y\}$ such that $\nu_f(f_i):=ord_t\varphi^*(f_i)=v_i$ for $0\leq i\leq g$. In particular, we take $f_0=x$ and $f_1=y$. 

Given $\emptyset\neq Q\subset \frac{\Omega^1}{S(f)}$, we say that $r$ is a {\it reduction} of $0\neq\overline{\omega}\in \frac{\Omega^1}{S(f)}$ modulo $Q$ if there exist $\alpha_i\in\mathbb{N}$ and $q\in Q$ such that
$$r=\overline{\omega}-\prod_{i=0}^{g}\overline{f}_i^{\alpha_i}q, \ \ \mbox{with}\ \ \nu_f(r)>\nu_f(\overline{\omega}).$$

We say that
$r$ is a {\bf final reduction} of $\overline{\omega}$ modulo $Q$ (and write $\overline{\omega} \;
\stackrel{Q}{\longrightarrow} \; r$) when $r$ is obtained from $\overline{\omega}$ through a (possibly infinite) chain of
reductions modulo $Q$ and cannot be reduced further.

A {\bf minimal $S$-process} of a pair of elements
$\overline{\omega},\overline{\eta}\in\frac{\Omega^1}{S(f)}$ is defined as
$$s(\overline{\omega},\overline{\eta})=a\prod_{i=0}^{g}\overline{f}_i^{\alpha_i}\overline{\omega}+b\prod_{i=0}^{g}\overline{f}_i^{\beta_i}\overline{\eta},$$
where $a,b\in \mathbb{C}$ and
$$\nu_f\left (s(\overline{\omega},\overline{\eta})\right )>\nu_f\left (\prod_{i=0}^{g}\overline{f}_i^{\alpha_i}\overline{\omega}\right)=\nu_f\left (\prod_{i=0}^{g}\overline{f}_i^{\beta_i}\overline{\eta}\right )$$ whenever $s(\overline{\omega},\overline{\eta})\neq 0$. Here, $(\alpha_0,\ldots ,\alpha_{g},\beta_0,\ldots
,\beta_{g})$ is a minimal solution of the linear Diophantine equation
$\sum_{i=0}^{g}\alpha_iv_i+\nu_f(\overline{\omega})=\sum_{i=0}^{g}\beta_iv_i+\nu_f(\overline{\eta})$.

For each pair  $\overline{\omega},\overline{\eta}\in\frac{\Omega^1}{S(f)}$, there exists a finite
number $n(\overline{\omega},\overline{\eta})$ of minimal $S$-processes, which we denote by
$s_i(\overline{\omega},\overline{\eta})$ with $i=1,\ldots ,n(\overline{\omega},\overline{\eta})$.

In the sequel we present the Algorithm $4.10$ from \cite{basestandard}, which allows one to
compute a Standard basis for $\frac{\Omega^1}{S(f)}$ in a finite number of steps.

\newpage
\begin{center} {\bf Algorithm 1:} Standard basis $\mathcal{B}$ for
$\frac{\Omega^1}{S(f)}$: \vspace{0.2cm}

\begin{tabular}{|l|}
	\hline
\noindent{\bf input:} $f_i\ \mbox{such that}\ \nu_f(f_i)=v_i\ \mbox{for}\ i=0,\dots ,g$;\\

\noindent{\bf define:} $Q_{-1}=\emptyset$,\ $Q_0=\{\overline{df_i},\ i=0,\dots ,g\}$ and $k:=0;$\\

\noindent{\bf while $Q_{k-1}\neq Q_k$ do}\\

\hspace{0.3cm} $S:= \{ s(\overline{\omega},\overline{\eta});\ s(\overline{\omega},\overline{\eta})$ is a minimal $S$-process of $\overline{\omega},\overline{\eta}\in Q_k\};$\\

\hspace{0.3cm} $R:= \{ r;\ s(\overline{\omega},\overline{\eta}) \; \stackrel{{Q_k}}{\longrightarrow}\; r$ and $r\not =0,\;
\forall s(\overline{\omega},\overline{\eta})\in S \};$\\

\hspace{0.3cm} $Q_{k+1}:=Q_i\cup R;$\\

\hspace{0.3cm} $k:=k+1;$\\

\noindent {\bf output:} $\mathcal{B}=Q_k.$\\
\hline
\end{tabular}
\end{center}

\vspace{0.2cm}

\begin{example}\label{ex-sb} Consider $gcd(n,m)=1<n<m$ with $n\neq 2$
		and the curve $\mathcal{C}_f$ defined by $$f=y^n-x^m+x^{m-k}y^{n-2}\ \ \mbox{with}\ \ 
		2\leq k< min\left \{ \frac{(n-2)m}{n},2+\frac{2m}{n}\right \}.$$
		
Notice that $f\in \mathbb{C}\{x,y\}$ is irreducible and it admits a parametrization $$\varphi(t)=\left (t^n,t^m-\frac{1}{n}t^{(n-1)m-kn}+\sum_{i>(n-1)m-kn}a_it^i\right ).$$ Since $k\leq \frac{(n-2)m}{n}$, we obtain $m<(n-1)m-kn$, and therefore $\Gamma(f)=n\mathbb{N}+m\mathbb{N}$ and $\mu(f)=(n-1)(m-1)=\min\{\gamma\in\Gamma(f);\ \gamma+\mathbb{N}\subset\Gamma(f)\}$.
		
Let us apply Algorithm 1 to compute a Standard
		basis for $\frac{\Omega^1}{S(f)}$ and the values set $\Lambda(f)$.
		
		Since $\nu_f(x)=n$ and $\nu_f(y)=m$, we set $f_0=x$ and $f_1=y$.
		
		{\bf Step $i=0$:} We have		$Q_0=\{\overline{dx},\overline{dy}\}$ and 			$$S=\{s_1(\overline{dx},\overline{dy})=n\overline{x}\overline{dy}-m\overline{y}\overline{dx},\ s_2(\overline{dx},\overline{dy})=n\overline{y}^{n-1}\overline{dy}-m\overline{x}^{m-1}\overline{dx}\}.$$
		From (\ref{psi}), we get			$$s_1(\overline{dx},\overline{dy})=n\overline{x}\overline{dy}-m\overline{y}\overline{dx}=-((n-2)m-kn)t^{(n-1)m-(k-1)n}+\sum_{i>(n-1)m-kn}(i-m)nt^{i+n}$$
		and
			$$s_2(\overline{dx},\overline{dy})+(m-k)\overline{x}^{m-k-1}\overline{y}^{n-2}\overline{dx}+(n-2)\overline{x}^{m-k}\overline{y}^{n-3}\overline{dy}=\overline{df}=0,$$ that is, the final reduction of		$s_2(\overline{dx},\overline{dy})$ modulo $Q_0$ is zero. 
		
		On the other hand, since 		$\nu_f(s_1(\overline{dx},\overline{dy}))=(n-1)m-(k-1)n\not\in\Gamma(f)$, we have that $s_1(\overline{dx},\overline{dy})$ coincides with its final reduction modulo $Q_0$. In addition, we get $\nu_f(\overline{x}^rs_1(\overline{dx},\overline{dy}))=(n-1)m-(k-1-r)n\not\in\Gamma(f)$ for $0\leq r\leq k-2$, so we have $\{(n-1)m-(k-1-r)n;\ 0\leq r\leq k-2\}\subseteq \Lambda(f)\setminus\Gamma(f)$ and any integer greater than $(n-2)m-n$ belongs to $\Lambda(f)$.
		
		{\bf Step $i=1$:} In this step we have 		$Q_1=\{\overline{dx},\overline{dy},\overline{\omega}_1:=n\overline{x}\overline{dy}-m\overline{y}\overline{dx}\}$ and
		$$\begin{array}{ll}			S =  \left \{ \right . & s_1(\overline{dx},\overline{dy}),\ 			s_2(\overline{dx},\overline{dy}),\vspace{0.15cm} \\
			& s_1(\overline{dx},\overline{\omega}_1)=\frac{(n-2)m-kn}{n}\overline{x}^{m-k}\overline{dx}+\overline{y}\overline{\omega}_1,			\  s_2(\overline{dx},\overline{\omega}_1)=\frac{(n-2)m-kn}{n}\overline{y}^{n-1}\overline{dx}+\overline{x}^k\overline{\omega}_1, \vspace{0.15cm} \\ 
			 & \left . s_1(\overline{dy},\overline{\omega}_1)=\frac{(n-2)m-kn}{m}\overline{y}^{n-2}\overline{dy}+\overline{x}^{k-1}\overline{\omega}_1,\  s_2(\overline{dy},\overline{\omega}_1)=\frac{(n-2)m-kn}{m}\overline{x}^{m-k+1}\overline{dy}+\overline{y}^2\overline{\omega}_1\ \			\right \}.\end{array}$$
		
		It is not necessary to analyze the first two $S$-processes, as they were already considered in the previous step. Since $k<2+\frac{2m}{n}$, we get 	$$\min\{\nu_f(s_1(\overline{dx},\overline{\omega}_1)),\nu_f(s_2(\overline{dx},\overline{\omega}_1)),\nu_f(s_1(\overline{dy},\overline{\omega}_1)),\nu_f(s_2(\overline{dx},\overline{\omega}_1))\}>(m-k+1)n>(n-2)m-n$$ and consequently, each $S$-process in $S$ has a vanishing final reduction modulo $Q_1$. Hence, $\mathcal{B}=Q_1=\{\overline{dx}, \overline{dy}, \overline{\omega_1}=n\overline{xdy}-m\overline{ydx}\}$ is a Standard basis for $\frac{\Omega^1}{S(f)}$ and \begin{equation}\label{r}\Lambda(f)\setminus\Gamma(f)= \{(n-1)m-(k-1-r)n;\ 	0\leq r\leq k-2\}.\end{equation}
\end{example}

\begin{remark}
By \cite[Proposition 3.31]{Pol} (or \cite[Corollary 6]{BGHH}), there is an $\mathcal{O}_f$-isomorphism
$$\begin{array}{cccc}
	\Phi : & J(f)\mathcal{O}_f:=\mathcal{O}_ff_x+\mathcal{O}_ff_y & \rightarrow & \frac{\Omega_f}{\mathcal{T}_f}\cong\frac{\Omega^1}{S(f)} \\
	& \overline{Bf_x+Af_y} & \mapsto & \overline{Adx-Bdy}
\end{array}$$
satisfying $\nu_f(\overline{Adx-Bdy})+\mu(f)-1=\nu_f(\overline{Bf_x+Af_y})=I(f,Bf_x+Af_y)$. In this way, we can apply the Algorithm 1 to obtain a Standard basis for $J(f)\mathcal{O}_f$.
\end{remark}

If $\mathcal{B}=\{\overline{\eta}_1,\ldots ,\overline{\eta}_s\}$ is a Standard basis for $\frac{\Omega^1}{S(f)}$, then, by Algorithm 1, every minimal
$S$-process of a pair of elements in $\mathcal{B}$ has a zero final reduction modulo $\mathcal{B}$, that is,
\begin{equation}\label{reduc}
	s_l(\overline{\eta}_j,\overline{\eta}_k)=\left (a\overline{f}_0^{\alpha_{jkl0}}\cdot\ldots\cdot\overline{f}_g^{\alpha_{jklg}}\right )\overline{\eta}_j-\left (b\overline{f}_0^{\beta_{jkl0}}\cdot\ldots\cdot\overline{f}_g^{\beta_{jklg}}\right )\overline{\eta}_k=\sum_{i=1}^{s}\overline{Q}_{ijkl}\overline{\eta}_i\end{equation}
where $Q_{ijkl}\in\mathbb{C}\{f_0,\ldots ,f_g\}$ for
$1\leq i,j,k\leq s$ and $l=1,\ldots ,n(j,k)$. 

Define
\begin{equation}\label{E}
\overline{E}_{ijkl}=\left \{ 
\begin{array}{ll}
	\overline{Q}_{jjkl}-a\overline{f}_0^{\alpha_{jkl0}}\cdot\ldots\cdot\overline{f}_g^{\alpha_{jklg}} & \mbox{if}\ \ i=j\vspace{0.15cm}\\
		\overline{Q}_{kjkl}+b\overline{f}_0^{\beta_{jkl0}}\cdot\ldots\cdot\overline{f}_g^{\beta_{jklg}} & \mbox{if}\ \ i=k\vspace{0.15cm}\\
		\overline{Q}_{ijkl} & \mbox{otherwise,}
\end{array}\right .
\end{equation}
so, we have $\sum_{i=1}^{s}\overline{E}_{ijkl}\overline{\eta}_i=\overline{0}$, that is, $\sum_{i=1}^{s}E_{ijkl}\eta_i \in S(f)$.

Notice that $(\overline{E}_{1jkl},\ldots ,\overline{E}_{sjkl})$ is a syzygy for $(\overline{\eta}_1,\ldots,\overline{\eta}_r)$. By the Schreyer's Theorem (see Theorem 2.5.9 in \cite{singular}), the set  $\{(\overline{E}_{1jkl},\ldots ,\overline{E}_{sjkl});\ 1\leq j,k\leq s\ \mbox{and}\ 1\leq l\leq n(j,k)\}$ generates the $\mathcal{O}_f$ of syzygies. Consequently,
 $$\left \{\sum_{i=1}^{s}E_{ijkl}\eta_i;\ 1\leq j,k\leq s\ \mbox{and}\ 1\leq l\leq n(j,k)\right \}$$
 is a generating set for $S(f)$. 
 
 This leads to the following procedure to compute a Saito basis for $S(f)$:

\vspace{0.2cm}
\begin{center} {\bf Algorithm 2:} Saito basis for $S(f)$: \vspace{0.2cm}
	
	\begin{tabular}{|l|}
		\hline
		\noindent{\bf Input:} $f_i$ such that $\nu_f(f_i)=v_i$ for $i=0,\dots ,g$;\\
		
		\noindent{\bf Compute:} \\
		
		\hspace{0.5cm} $\mathcal{B}:=\{\overline{\eta}_1,\ldots ,\overline{\eta}_s\}$ a Standard Basis for $\frac{\Omega^1}{S(f)}$ via Algorithm 1; \\
		
		\hspace{0.5cm} $S:= \{$ minimal $S$-process\ $s_l(\overline{\eta_j},\overline{\eta_k})$ for every  $\overline{\eta}_j,\overline{\eta}_k\in \mathcal{B}\};$\\
		
		\noindent{\bf For each} $s_{l}(\overline{\eta}_j,\overline{\eta}_k)\in S$\  {\bf compute its reduction modulo $\mathcal{B}$ as (\ref{reduc})} \\
		\noindent{\bf and determine $\overline{E}_{ijkl}$ for
		all $1\leq i,j,k\leq s$ and $1\leq l\leq n(j,k)$ as (\ref{E});}\\
		
		\noindent{\bf Define} $G:=\{\sum_{i=1}^{s}E_{ijkl}\eta_i;\ E_{ijkl}\eta_i$ is a representative of $\overline{E}_{ijkl}\overline{\eta}_i\}$;\\
		
		\noindent{\bf Select $\omega_1,\omega_2\in G$ such that $\omega_1\wedge\omega_2=ufdx\wedge dy$ for some unit $u\in\mathbb{C}\{x,y\}$};\\
		
		\noindent {\bf Output:} $\{\omega_1,\omega_2\}$ a Saito basis for $S(f)$.\\
		\hline
	\end{tabular}
\end{center}

\vspace{0.2cm}

Let us now illustrate Algorithm 2 with the curve \( \mathcal{C}_f \) from Example \ref{ex-sb}.

\begin{example}\label{ex-cof} Let $f=y^n-x^m+x^{m-k}y^{n-2}\in\mathbb{C}\{x,y\}$ such that $gcd(n,m)=1<n<m$ with $n\neq 2$ and $2\leq k< min\left \{ \frac{(n-2)m}{n},2+\frac{2m}{n}\right \}$. Recall that $\mathcal{C}_f$ admits a parametrization $\varphi(t)=\left (t^n,t^m-\frac{1}{n}t^{(n-1)m-kn}+\sum_{i>(n-1)m-kn}a_it^i\right )$.
	
According to Example \ref{ex-sb}, a Standard basis for $\frac{\Omega^1}{S(f)}$	is $$\mathcal{B}=\{\overline{\eta}_1=\overline{dx}, \overline{\eta_2}=\overline{dy}, \overline{\eta_3}=n\overline{xdy}-m\overline{ydx}\}$$
and computing the minimal $S$-processes for every pair of elements in $\mathcal{B}$ we get
$$S=\{\ s_1(\overline{\eta}_1,\overline{\eta}_2),\ s_2(\overline{\eta}_1,\overline{\eta}_2),\ s_1(\overline{\eta}_1,\overline{\eta}_3),\ s_2(\overline{\eta}_1,\overline{\eta}_3),\ s_1(\overline{\eta}_2,\overline{\eta}_3),\ s_2(\overline{\eta}_2,\overline{\eta}_3)\ \}.$$ 
Since

		$\bullet\ s_1(\overline{\eta}_1,\overline{\eta}_2)=n\overline{x}d\overline{y}-m\overline{y}d\overline{x}=\overline{\eta}_3,$ \vspace{0.3cm}
		
		$\bullet\ s_2(\overline{\eta}_1,\overline{\eta}_2)=n\overline{y}^{n-1}d\overline{y}-m\overline{x}^{m-1}d\overline{x}=-(m-k)\overline{x}^{m-k-1}\overline{y}^{n-2}d\overline{x}-(n-2)\overline{x}^{m-k}\overline{y}^{n-3}d\overline{y},$ \vspace{0.3cm}
		
		$\bullet\ s_1(\overline{\eta}_1,\overline{\eta}_3)=\frac{(n-2)m-kn}{n}\overline{x}^{m-k}d\overline{x}+\overline{y}\overline{\eta}_3=$
		
		\hspace{1.85cm}
		$=\frac{(n-2)((n-2)m-kn)}{nm}\overline{x}^{m-2k+1}\overline{y}^{n-3}d\overline{y}-\frac{(m-k)(n-2)}{nm}\overline{x}^{m-2k}\overline{y}^{n-3}\overline{\eta}_3,$
		\vspace{0.3cm}
		
		$\bullet\
		s_2(\overline{\eta}_1,\overline{\eta}_3)=\frac{(n-2)m-kn}{n}\overline{y}^{n-1}d\overline{x}+\overline{x}^k\overline{\eta}_3=\frac{2}{n}\overline{y}^{n-2}\overline{\eta}_3,$
		\vspace{0.3cm}
				
		$\bullet\
		s_1(\overline{\eta}_2,\overline{\eta}_3)=\frac{(n-2)m-kn}{m}\overline{y}^{n-2}d\overline{y}+\overline{x}^{k-1}\overline{\eta}_3-\frac{(m-k)((n-2)m-kn)}{nm}\overline{x}^{m-k-1}\overline{y}^{n-3}d\overline{x}+$
		
		\hspace{2.5cm}
		$+\frac{(m-k)(n-2)((n-2)m-kn)}{nm^2}\overline{x}^{m-2k}\overline{y}^{2n-6}d\overline{y}-\frac{(n-2)(m-k)^2}{nm^2}\overline{x}^{m-2k-1}\overline{y}^{2n-6}\overline{\eta}_3,$
		\vspace{0.3cm}
		
		$\bullet\
		s_2(\overline{\eta}_2,\overline{\eta}_3)=\frac{(n-2)m-kn}{m}\overline{x}^{m-k+1}d\overline{y}+\overline{y}^2\overline{\eta}_3=-\frac{k}{m}\overline{x}^{m-k}\overline{\eta}_3$.
		\vspace{0.3cm}
		
Thus, a generating set for $S(f)$ is
$$\begin{array}{ll}
G = & \left \{ nxdy-mydx-\eta_3=0,\right .\vspace{0.15cm}\\  & \hspace{0.2cm} \omega_0:=ny^{n-1}dy-mx^{m-1}dx+(m-k)x^{m-k-1}y^{n-2}dx+(n-2)x^{m-k}y^{n-3}dy=df,\vspace{0.15cm}\\
& \hspace{0.2cm} \omega_1:=\frac{(n-2)m-kn}{n}x^{m-k}dx+y\eta_3+ \\
&\hspace{1.5cm}-\frac{(n-2)((n-2)m-kn)}{nm}x^{m-2k+1}y^{n-3}dy+\frac{(m-k)(n-2)}{nm}x^{m-2k}y^{n-3}\eta_3, \vspace{0.15cm}\\	
& \hspace{0.2cm} \omega_3:= \frac{(n-2)m-kn}{n}y^{n-1}dx+x^k\eta_3-\frac{2}{n}y^{n-2}\eta_3 \vspace{0.15cm}\\
& \hspace{0.2cm} \omega_2:= \frac{(n-2)m-kn}{m}y^{n-2}dy+x^{k-1}\eta_3+\frac{(m-k)((n-2)m-kn)}{nm}x^{m-k-1}y^{n-3}dx+ \\
&\hspace{1.5cm}-\frac{(m-k)(n-2)((n-2)m-kn)}{nm^2}x^{m-2k}y^{2n-6}dy+\frac{(n-2)(m-k)^2}{nm^2}x^{m-2k-1}y^{2n-6}\eta_3 \vspace{0.15cm} \\
&\hspace{0.2cm}\left . \omega_4:=\frac{(n-2)m-kn}{m}x^{m-k+1}dy+y^2\eta_3+\frac{k}{m}x^{m-k}\eta_3\right \}.	
\end{array}$$
 
Since $\omega_1\wedge\omega_2=-((n-2)m-kn)fdx\wedge dy$, it follows that $\{\omega_1,\omega_2\}$ is a Saito basis for $S(f)$. In particular, we have
$$\omega_1\wedge df=-nm\left ( y+\frac{(m-k)(n-2)}{nm}x^{m-2k}y^{n-3}\right )fdx\wedge dy\ \ \mbox{and}$$
$$\omega_2\wedge df=-nm\left ( x^{k-1}+\frac{(m-k)^2(n-2)}{nm^2}x^{m-2k-1}y^{2n-6}\right )fdx\wedge dy.$$
\end{example}

According to Berger (see \cite{berger}), for an irreducible $f\in\mathbb{C}\{x,y\}$, we get $$\dim_{\mathbb{C}}\mathcal{T}_f=\mu(f)-\sharp (\Lambda(f)\setminus\Gamma(f)).$$ So, by (\ref{tjurina-torsion}) and Proposition \ref{tjurina}, we get
$$\sharp (\Lambda(f)\setminus\Gamma(f))=I(h_1,h_2),$$
where $\langle h_1,h_2\rangle$ is the cofactor ideal of $S(f)$. 

\begin{remark}\label{tau=3}
For $f=y^n-x^m+x^{m-k}y^{n-2}\in\mathbb{C}\{x,y\}$ with $gcd(n,m)=1<n<m$, $n\neq 2$ and $2\leq k< min\left \{ \frac{(n-2)m}{n},2+\frac{2m}{n}\right \}$, the Example \ref{ex-sb} and Example \ref{ex-cof}  we obtain
$\sharp(\Lambda(f)\setminus\Gamma(f))=k-1=I(h_1,h_2)$ 
and consequently, since $\mu(f)=(m-1)(n-1)$, we obtain $$\tau(f)=(m-1)(n-1)-(k-1).$$

Notice that this case includes all curves $\mathcal{C}_f$ with $\mu(f)-\tau(f)=1$ (see \cite{bayer})
	and all curves with multiplicity equal to three that are not
	analytically equivalent to the monomial curves (see
	\cite{multifour}). 
\end{remark}

Cano, Corral and Senovilla-Sanz (see \cite{felipe-nuria-david}) show how to obtain an element of a Saito basis for curves with semigroup $\langle v_0,v_1\rangle$ using results that can be reinterpreted in terms of syzygies of the elements in a Standard basis for $\frac{\Omega^1}{S(f)}$. Therefore, Algorithm 2 can be considered as a generalization of the method presented in \cite{felipe-nuria-david}. 

Furthermore, in \cite{carvalho}, the first and third authors present an algorithm to compute a Standard basis for $\frac{\Omega^1}{S(f)}$ for any reduced $f\in\mathbb{C}\{x,y\}$. Proceeding as above, we can obtain a similar algorithm to compute a Saito basis for $S(f)$. 

An alternative way to obtain a Saito basis for $S(f)$ for any reduced $f=f_1f_2\in\mathbb{C}\{x,y\}$ is to find a generating set for $S(f_1)\cap S(f_2)$ (see for instance methods to compute module intersections in \cite{singular}). Indeed, we have $S(f_1f_2)=S(f_1)\cap S(f_2)$, as shown in the next lemma.

\begin{lemma}
Let $f=f_1f_2\in\mathbb{C}\{x,y\}$ be reduced. Then $S(f)=S(f_1)\cap S(f_2)$. 
\end{lemma}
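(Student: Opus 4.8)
The plan is to prove the two inclusions $S(f_1f_2)\subseteq S(f_1)\cap S(f_2)$ and $S(f_1)\cap S(f_2)\subseteq S(f_1f_2)$ separately, working throughout with the cofactor characterization from Remark \ref{remark-equi-saito}: an element $\omega\in\Omega^1$ lies in $S(g)$ precisely when $\omega\wedge dg = h_g\, g\, dx\wedge dy$ for some $h_g\in\mathbb{C}\{x,y\}$. Since $f=f_1f_2$, the product rule gives $df = f_2\,df_1 + f_1\,df_2$, and this identity is the computational backbone of both inclusions.

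For the forward inclusion, suppose $\omega\in S(f)$, so $\omega\wedge df = hf\,dx\wedge dy = h f_1 f_2\, dx\wedge dy$. Expanding with $df = f_2\,df_1+f_1\,df_2$ yields
$$\omega\wedge df = f_2(\omega\wedge df_1) + f_1(\omega\wedge df_2) = h f_1 f_2\, dx\wedge dy.$$
Writing $\omega\wedge df_i = \phi_i\, dx\wedge dy$, this reads $f_2\phi_1 + f_1\phi_2 = h f_1 f_2$. Reducing modulo $f_1$ gives $f_2\phi_1\equiv 0$, and since $\gcd(f_1,f_2)=1$ (as $f$ is reduced) we conclude $f_1\mid\phi_1$, i.e. $\omega\wedge df_1 = h_1 f_1\, dx\wedge dy$, so $\omega\in S(f_1)$; symmetrically $\omega\in S(f_2)$. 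This direction is clean.

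For the reverse inclusion, take $\omega\in S(f_1)\cap S(f_2)$, so $\omega\wedge df_i = h_i f_i\, dx\wedge dy$ for $i=1,2$. Then
$$\omega\wedge df = f_2(\omega\wedge df_1)+f_1(\omega\wedge df_2) = (h_1+h_2)f_1 f_2\, dx\wedge dy = (h_1+h_2)f\, dx\wedge dy,$$
exhibiting $(h_1+h_2)$ as a cofactor and hence $\omega\in S(f)$. I expect the main subtlety to be the step in the forward inclusion where coprimality of $f_1$ and $f_2$ is invoked to pass divisibility through the equation $f_2\phi_1+f_1\phi_2 = hf_1f_2$; this uses that $\mathbb{C}\{x,y\}$ is a UFD together with $\gcd(f_1,f_2)=1$, which holds because $f$ being reduced forces distinct irreducible factors. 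The cofactor formulation does all the real work, converting the wedge-product conditions into divisibility statements in $\mathbb{C}\{x,y\}$ where the product rule and coprimality can be applied directly.
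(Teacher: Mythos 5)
Your proof is correct and takes essentially the same route as the paper's: both directions rest on the cofactor characterization of Remark \ref{remark-equi-saito}, the product rule $df = f_2\,df_1 + f_1\,df_2$, and the coprimality $\gcd(f_1,f_2)=1$ (forced by reducedness) to extract the divisibility $f_i \mid \phi_i$ in the forward inclusion. The only cosmetic difference is that you phrase that step as reduction modulo $f_1$, whereas the paper rearranges the identity explicitly before invoking coprimality; the content is identical.
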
 
\Dem If $\omega\in S(f)$ then $\omega\wedge df=\omega\wedge (f_1df_2+f_2df_1)=hfdx\wedge dy=hf_1f_2dx\wedge dy$ for some $h\in\mathbb{C}\{x,y\}$, that is 
$$f_i(\omega\wedge df_j)=f_j(hf_idx\wedge dy-\omega\wedge df_i)\ \ \mbox{for}\ \ \{i,j\}=\{1,2\}.$$
Since $gcd(f_1,f_2)=1$, we obtain $\omega\wedge df_i=h_if_idx\wedge dy$, so that $\frac{\omega\wedge df_i}{dx\wedge dy}\in\langle f_i\rangle$ for $i=1,2$. Hence, $\omega\in S(f_1)\cap S(f_2)$.

Conversely, if $\omega\wedge df_i=h_if_idx\wedge dy$ for $i=1,2$, then
$$\omega\wedge df=\omega\wedge (f_1df_2+f_2df_1)=(f_1h_2f_2+f_2h_1f_1)dx\wedge dy=(h_1+h_2)fdx\wedge dy,$$
which implies $\omega\in S(f)$.
\cqd

In some cases, a Saito basis for $S(f_1f_2)$ can be obtained from a Saito basis for $S(f_1)$, as shown in the following theorem.

\begin{theorem}\label{theo-f1f2}
Let \(f = f_1 f_2 \in \mathbb{C}\{x,y\}\), and suppose that \(\{\omega_1,\omega_2\}\) is a Saito basis for \(S(f_1)\).  Assume futhermore that, for each \(i=1,2\),
$ \omega_i\wedge df_2 \;=\;(R_i + H_i\,f_2)\,dx\wedge dy$,
with \(R_i\not\in\langle f_2\rangle\).  If $f_2\in\langle \frac{R_1}{G},\frac{R_2}{G}\rangle$, where 
$G = \gcd(R_1,R_2)$,
then we can choose \(S_1,S_2\in\mathbb{C}\{x,y\}\) such that
$S_1\,\frac{R_1}{G}\;+\;S_2\,\frac{R_2}{G}\;=\;v\,f_2$
for some unit \(v\in\mathbb{C}\{x,y\}\).  Defining
$\eta_1 = \frac{1}{G}\bigl(R_2\,\omega_1 - R_1\,\omega_2\bigr),
\eta_2 = S_1\,\omega_1 + S_2\,\omega_2,
$
we obtain a Saito basis \(\{\eta_1,\eta_2\}\) of \(S(f_1f_2)\).
\end{theorem}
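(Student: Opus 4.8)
The plan is to verify that $\{\eta_1,\eta_2\}$ satisfies the Saito criterion for $f=f_1f_2$, using the preceding lemma which identifies $S(f_1f_2)$ with $S(f_1)\cap S(f_2)$. First I would note that $G=\gcd(R_1,R_2)$ makes $\frac{R_1}{G}$ and $\frac{R_2}{G}$ holomorphic, so $\eta_1=\frac{1}{G}(R_2\omega_1-R_1\omega_2)$ and $\eta_2=S_1\omega_1+S_2\omega_2$ are genuine $\mathbb{C}\{x,y\}$-linear combinations of $\omega_1,\omega_2$; since $S(f_1)$ is a module containing both generators, this gives $\eta_1,\eta_2\in S(f_1)$ immediately. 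The coefficients $S_1,S_2$ and the unit $v$ are produced at once by the hypothesis $f_2\in\langle\frac{R_1}{G},\frac{R_2}{G}\rangle$ (one may even take $v=1$), so this existence step presents no difficulty.

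Next I would show $\eta_1,\eta_2\in S(f_2)$ by wedging with $df_2$ and checking, via Remark \ref{remark-equi-saito}, that the resulting cofactor lies in $\langle f_2\rangle$. For $\eta_1$, substituting $\omega_i\wedge df_2=(R_i+H_if_2)\,dx\wedge dy$ causes the terms $R_2R_1$ and $R_1R_2$ to cancel, leaving $\eta_1\wedge df_2=\left(\frac{R_2}{G}H_1-\frac{R_1}{G}H_2\right)f_2\,dx\wedge dy$ with holomorphic cofactor. For $\eta_2$, the defining relation $S_1\frac{R_1}{G}+S_2\frac{R_2}{G}=vf_2$ (equivalently $S_1R_1+S_2R_2=Gvf_2$) gives $\eta_2\wedge df_2=(Gv+S_1H_1+S_2H_2)f_2\,dx\wedge dy$, again with holomorphic cofactor. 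Hence $\eta_1,\eta_2\in S(f_2)$, and combining with the first paragraph and the preceding lemma, $\eta_1,\eta_2\in S(f_1)\cap S(f_2)=S(f_1f_2)$.

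Finally I would verify the Saito criterion by computing $\eta_1\wedge\eta_2$. Using $\omega_1\wedge\omega_1=\omega_2\wedge\omega_2=0$ and $\omega_2\wedge\omega_1=-\omega_1\wedge\omega_2$, every surviving term is a multiple of $\omega_1\wedge\omega_2$ and the expression collapses to $\eta_1\wedge\eta_2=\frac{R_1S_1+R_2S_2}{G}\,\omega_1\wedge\omega_2=vf_2\,\omega_1\wedge\omega_2$. Since $\{\omega_1,\omega_2\}$ is a Saito basis for $S(f_1)$, the Saito criterion gives $\omega_1\wedge\omega_2=uf_1\,dx\wedge dy$ with $u$ a unit, so $\eta_1\wedge\eta_2=uv\,f_1f_2\,dx\wedge dy$ with $uv$ a unit. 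By the Saito criterion this proves $\{\eta_1,\eta_2\}$ is a Saito basis for $S(f_1f_2)$.

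Each individual computation is short, so the real content lies in the construction of $\eta_1$ and $\eta_2$ rather than in any estimate. The step I expect to require the most care is the bookkeeping of cofactors in the last paragraph: one must check that the antisymmetric cancellation forcing $\eta_1\wedge\eta_2$ to equal exactly $vf_2$ times $\omega_1\wedge\omega_2$ meshes precisely with the factor $uf_1$ coming from the Saito criterion for $S(f_1)$, so that the product is a unit multiple of $f_1f_2$ and nothing extraneous survives. The hypothesis $R_i\notin\langle f_2\rangle$ serves as the non-degeneracy condition that keeps $\omega_i$ genuinely outside $S(f_2)$, making the coprime reduction by $G$ and the ideal membership of $f_2$ meaningful.
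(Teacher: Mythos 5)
Your proof is correct, and its final step coincides with the paper's: both compute $\eta_1\wedge\eta_2=\frac{R_1S_1+R_2S_2}{G}\,\omega_1\wedge\omega_2=uv\,f_1f_2\,dx\wedge dy$ and invoke Saito's criterion. Where you genuinely differ is the membership step $\eta_1,\eta_2\in S(f_1f_2)$, which you route through the preceding lemma $S(f_1f_2)=S(f_1)\cap S(f_2)$: membership in $S(f_1)$ is then free of charge, since $\eta_1,\eta_2$ are $\mathbb{C}\{x,y\}$-combinations of the basis $\{\omega_1,\omega_2\}$ of the module $S(f_1)$, and membership in $S(f_2)$ follows from your two short wedge computations with $df_2$ (the cancellation of $R_1R_2$ for $\eta_1$, and the relation $S_1R_1+S_2R_2=Gvf_2$ for $\eta_2$). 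The paper instead checks membership directly, expanding $\eta_i\wedge d(f_1f_2)=f_2(\eta_i\wedge df_1)+f_1(\eta_i\wedge df_2)$ using the cofactors $h_i$ of the $\omega_i$ relative to $f_1$; this is essentially your two checks fused into a single computation. Your route is leaner as a proof of the bare statement; the paper's computation has the side benefit of producing the explicit cofactors of $\eta_1,\eta_2$ relative to $f_1f_2$, namely $\frac{1}{G}\bigl((h_1+H_1)R_2-(h_2+H_2)R_1\bigr)$ and $S_1(h_1+H_1)+S_2(h_2+H_2)+Gv$, which are exactly what is needed later in Corollary \ref{yf} and in the Section 4 computations of $\tau(f)=\mu(f)-I(h_1,h_2)$ via Proposition \ref{tjurina}. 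You were also right to insist that Saito's criterion may only be applied to forms already known to lie in $S(f_1f_2)$, so the membership step cannot be bypassed; your handling of it is sound, as is your observation that one may normalize $v=1$.
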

\Dem
Since $\{\omega_1,\omega_2\}$ is a Saito basis for $S(f_1)$, by Saito's criterion there exist $h_i,u\in\mathbb{C}\{x,y\}$ with $u$ a unit, such that $\omega_1\wedge\omega_2=uf_1dx\wedge dy$ and $\omega_i\wedge df_1=h_if_1dx\wedge dy$ for $i=1,2$.

Taking $R_1, R_2, G, S_1, S_2\in\mathbb{C}\{x,y\}$ satisfying the hypothesis and considering
$$\eta_1 = \frac{1}{G}\bigl(R_2\,\omega_1 - R_1\,\omega_2\bigr),\ \ \
\eta_2 = S_1\,\omega_1 + S_2\,\omega_2,
$$ we get
$$\begin{array}{ll}\eta_1\wedge d(f_1f_2) & =f_2(\eta_1\wedge df_1)+f_1(\eta_1\wedge df_2)\\
	& = \left (f_2\left ( \frac{1}{G}R_2h_1f_1-\frac{1}{G}R_1h_2f_1\right )+f_1\left ( \frac{1}{G}R_2(R_1+H_1f_2)-\frac{1}{G}R_1(R_2+H_2f_2)\right )\right )dx\wedge dy \\
	& =\frac{1}{G}\left ( (h_1+H_1)R_2-(h_2+H_2)R_1\right )f_1f_2dx\wedge dy
	\\
	& \\
\eta_2\wedge d(f_1f_2) & = f_2 (\eta_2\wedge df_1)+f_1(\eta_2\wedge df_2) \\
& =f_2\left ( S_1h_1f_1+S_2h_2f_1\right )dx\wedge dy+f_1\left ( S_1(R_1+H_1f_2)+S_2(R_2+H_2f_2)\right )dx\wedge dy \\
& = (S_1(h_1+H_1)+S_2(h_2+H_2))f_1f_2 dx\wedge dy+(S_1R_1+S_2R_2)f_1 dx \wedge dy.
	\end{array}$$
	Since $S_1R_1+S_2R_2=Gvf_2$, it follows that $\eta_1,\eta_2\in S(f_1f_2)$. Finally, we compute:
	$$\begin{array}{ll}
	\eta_1\wedge \eta_2 & = \frac{1}{G}R_2\omega_1\wedge \left (S_1\omega_1+S_2\omega_2\right )-\frac{1}{G}R_1\omega_2\wedge\left (S_1\omega_1+S_2\omega_2\right )\\
	& = \frac{1}{G}R_2S_2uf_1dx\wedge dy+\frac{1}{G}R_1S_1uf_1dx\wedge dy=\left ( \frac{R_2S_2+R_1S_1}{G}\right )uf_1dx\wedge dy=uvf_1f_2dx\wedge dy	,	
		\end{array}$$
		so, by the Saito's criterion, $\{\eta_1,\eta_2\}$ is a Saito basis for $S(f_1f_2)$.
\cqd

\begin{example}
	Let $f_1=y^n-x^m, f_2=x^n-y^m\in\mathbb{C}\{x,y\}$ with $n<m$.
	
	Since $f_1$ is quasi-homogeneous, by Example \ref{qh}, the set $\{\omega_1=mydx-nxdy, \omega_2=df_1=ny^{n-1}dy-mx^{m-1}dx\}$ is a Saito basis for $S(f_1)$.
	
	We have that $$\omega_1\wedge df_2=((n^2-m^2)y^m+n^2f_2)dx\wedge dy\ \ \ \omega_2\wedge df_2=x^{n-1}y^{n-1}(-n^2+m^2x^{m-n}y^{m-n})dx\wedge dy.$$
With the notation of Theorem \ref{theo-f1f2}, we otain $R_1=(n^2-m^2)y^m$, $R_2=x^{n-1}y^{n-1}(-n^2+m^2x^{m-n}y^{m-n})$ and $G=gcd(R_1,R_2)=y^{n-1}$.
	
	Taking $S_1=y^{n-1}(-n^2+m^2x^{m-n}y^{m-n})$ and $S_2=(m^2-n^2)x$ we obtain $$S_1\frac{R_1}{G}+S_2\frac{R_2}{G}=(m^2-n^2)(-n^2+m^2x^{m-n}y^{m-n})f_2.$$
	Thus, $$\left \{\eta_1=x^{n-1}(-n^2+m^2x^{m-n}y^{m-n})\omega_1+(m^2-n^2)y^{m-n+1}\omega_2,\right .$$
$$\left .\eta_2=y^{n-1}(-n^2+m^2x^{m-n}y^{m-n})\omega_1+(m^2-n^2)x\omega_2\right \}$$ is a Saito basis for $S(f_1f_2)$. In fact, we have
$$\eta_1\wedge d(f_1f_2)=nx^{n-1}(n+m)(-n^2+m^2x^{m-n}y^{m-n})f_1f_2dx\wedge dy$$
$$\eta_2\wedge d(f_1f_2)=(m^2-n^2)y^{n-1}(-n^2+m^2x^{m-n}y^{m-n})f_1f_2dx\wedge dy$$
with $\eta_1\wedge \eta_2=nm(m^2-n^2)(-n^2+m^2x^{m-n}y^{m-n})f_1f_2dx\wedge dy$.
\end{example} 

A particular case of Theorem \ref{theo-f1f2} is given in the following corollary.

\begin{corollary}\label{yf}
Let $\{\omega_1=A_1dx+B_1dy,\omega_2=A_2dx+B_2dy\}$ be a Saito basis for $S(f)$. Suppose such that $\alpha=mult(A_1(x,0))\leq mult(A_2(x,0))$, then $\left \{\eta_1:=\frac{A_2(x,0)}{x^{\alpha}}\omega_1-\frac{A_1(x,0)}{x^{\alpha}}\omega_2,\eta_2:=y\omega_1\right \}$ is a Saito basis for $S(yf)$. Moreover, writing $\omega_i\wedge df=h_ifdx\wedge dy$ and $A_i=A_i(x,0)+yH_i$ we get
$$\eta_1\wedge d(yf)=\left ( \frac{A_2(x,0)(h_1+H_1)-A_1(x,0)(h_2+H_2)}{x^{\alpha}}\right )yfdx\wedge dy$$
$$\mbox{and}\ \ \eta_2\wedge d(yf)=(yh_1+A_1)yfdx\wedge dy.$$
\end{corollary}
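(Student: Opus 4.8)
The plan is to recognize this as the special case $f_1=f$, $f_2=y$ of Theorem \ref{theo-f1f2}, to identify the data $R_i,H_i,G,S_1,S_2$ appearing there, and to read off the cofactor formulas from the computations in its proof. First I would compute $\omega_i\wedge df_2=\omega_i\wedge dy=A_i\,dx\wedge dy$, and use the decomposition $A_i=A_i(x,0)+yH_i$ (so that $H_i=\frac{A_i-A_i(x,0)}{y}\in\mathbb{C}\{x,y\}$) to write $\omega_i\wedge dy=(A_i(x,0)+H_i\,y)\,dx\wedge dy$. This places us exactly in the hypothesis of the theorem with $R_i=A_i(x,0)$ and $f_2=y$; note that $R_i\notin\langle y\rangle$ because $A_i(x,0)$ is a nonzero element of $\mathbb{C}\{x\}$.

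Next I would compute $G=gcd(R_1,R_2)=gcd(A_1(x,0),A_2(x,0))$ in the discrete valuation ring $\mathbb{C}\{x\}$. Since $\alpha=mult(A_1(x,0))\leq mult(A_2(x,0))$, the gcd equals $x^{\alpha}$ up to a unit, so I take $G=x^{\alpha}$; the key consequence is that $R_1/G=A_1(x,0)/x^{\alpha}$ is a \emph{unit}. Hence $\langle R_1/G,R_2/G\rangle=\mathbb{C}\{x,y\}$, which trivially contains $f_2=y$, verifying the remaining hypothesis of Theorem \ref{theo-f1f2}. Moreover I can choose $S_1=y$, $S_2=0$, for then $S_1\,(R_1/G)+S_2\,(R_2/G)=y\,A_1(x,0)/x^{\alpha}=v\,y$ with $v=A_1(x,0)/x^{\alpha}$ a unit; with these choices the theorem's $\eta_1=G^{-1}(R_2\omega_1-R_1\omega_2)$ and $\eta_2=S_1\omega_1+S_2\omega_2$ become precisely the $\eta_1$ and $\eta_2=y\omega_1$ in the statement.

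Applying Theorem \ref{theo-f1f2} then yields that $\{\eta_1,\eta_2\}$ is a Saito basis for $S(yf)$, with $\eta_1\wedge\eta_2=uv\,fy\,dx\wedge dy$ a unit times $yf$. For the cofactor identities I would substitute $G=x^{\alpha}$, $R_1=A_1(x,0)$, $R_2=A_2(x,0)$, $f_1=f$, $f_2=y$, $S_1=y$, $S_2=0$, $v=A_1(x,0)/x^{\alpha}$ into the formulas for $\eta_i\wedge d(f_1f_2)$ obtained in the proof of the theorem. This gives the stated expression for $\eta_1\wedge d(yf)$ directly; for $\eta_2$ it gives cofactor $y(h_1+H_1)+Gv=y(h_1+H_1)+A_1(x,0)$, which I would simplify to $yh_1+A_1$ using $A_1=A_1(x,0)+yH_1$.

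The main thing to watch is the gcd/unit step: everything hinges on $\alpha$ being the minimum of the two multiplicities, so that $R_1/G$ is a unit — this is what forces $v$ to be a unit and hence lets the Saito criterion conclude. I would also flag the degenerate possibility $A_2(x,0)=0$ (so $R_2\in\langle y\rangle$, formally outside the theorem's hypothesis): here $x^{\alpha}\mid R_2$ still holds trivially, so $\eta_1$ remains a genuine holomorphic $1$-form and the same direct wedge computations go through unchanged, yielding the result in that case as well.
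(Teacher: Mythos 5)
Your proposal is correct and follows essentially the same route as the paper's own proof: both specialize Theorem \ref{theo-f1f2} to $f_2=y$, identifying $R_i=A_i(x,0)$, $H_i$ from $A_i=A_i(x,0)+yH_i$, $G=x^{\alpha}$, $S_1=y$, $S_2=0$, and then read off the two cofactor formulas (with the same simplification $y(h_1+H_1)+A_1(x,0)=yh_1+A_1$ for $\eta_2$). Your closing remark on the degenerate case $A_2(x,0)=0$, where the theorem's hypothesis $R_2\notin\langle f_2\rangle$ formally fails but the direct wedge computation still yields the result, addresses a point the paper's proof passes over silently and is a sensible addition.
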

\Dem Taking $f_2=y$ in Theorem \ref{theo-f1f2} we have:
$$\omega_i\wedge df_2=\omega_i\wedge dy=A_idx\wedge dy=(A_i(x,0)+yH_i)dx\wedge dy\ \ \ \mbox{for}\ i=1,2.$$

Without loss of generality, assume that $\alpha=mult(A_1(x,0))\leq mult(A_2(x,0))$, that is $x^{\alpha}=gcd(A_1(x,0),A_2(x,0))$.

Since $\frac{A_1(x,0)}{x^{\alpha}}\in\mathbb{C}\{x\}$ is a unit and $$\frac{A_1(x,0)}{x^{\alpha}} f_2=y\cdot \frac{A_1(x,0)}{x^{\alpha}}+0\cdot \frac{A_2(x,0)}{x^{\alpha}}.$$
So, taking $R_1=A_1(x,0), R_2=A_2(x,0), G=x^{\alpha}, S_1=y$ and $S_2=0$ in Theorem \ref{theo-f1f2} we obtain
$$\eta_1=\frac{A_2(x,0)}{x^{\alpha}}\omega_1-\frac{A_1(x,0)}{x^{\alpha}}\omega_2\ \ \ \eta_2=y\omega_1+0\omega_2=y\omega_1$$
is a Saito basis for $S(yf)$ with
$$\eta_1\wedge d(yf)=\left ( \frac{A_2(x,0)(h_1+H_1)-A_1(x,0)(h_2+H_2)}{x^{\alpha}}\right )yfdx\wedge dy$$
$$\eta_2\wedge d(yf)=(yh_1+A_1)yfdx\wedge dy$$
and
$\eta_1\wedge \eta_2=\frac{A_1(x,0)}{x^{\alpha}}uyfdx\wedge dy$ where $\omega_1\wedge\omega_2=ufdx\wedge dy$ with $u\in\mathbb{C}\{x,y\}$ a unit.
\cqd

\section{Saito basis and Tjurina number for curves $\mathcal{C}_f$ with $m(f)\leq 3$}

Combining the results of previous section, we are able to present a Saito basis $\{\omega_1,\omega_2\}$ for plane curves $\mathcal{C}_f$ with multiplicity $m(f)\leq 3$. According to Proposition \ref{tjurina}, we have $\tau(f)=\mu(f)-I(h_1,h_2)$, where $\omega_i\wedge df=h_ifdx\wedge dy$ for $i=1,2$. Thus, as a biproduct, we can compute the Tjurina number for any plane curve with $m(f)\leq 3$.

Recall that if $f=\prod_{i=1}^{r}f_i$ then $\mu(f)=\sum_{i=1}^{r}\mu(f_i)+2\sum_{1\leq i<j\leq r}I(f_i,f_j)-r+1$.

\subsection{Multiplicity one}

If $f=\alpha x+\beta y+h\in\mathbb{C}\{x,y\}$ with $h\in\langle x,y\rangle^2$, then $\tau(f)=0$ and Remark \ref{equi-zero} provides that $\{fdg,df\}$ is a Saito basis for $S(f)$ for any 
$g=\gamma x+\delta y$ such that $\alpha\delta-\beta\gamma\neq 0$.

\subsection{Multiplicity two}

If $f\in\mathbb{C}\{x,y\}$ has $m(f)=2$, then $\tau(f)=\mu(f)$ and we have the following possibilities:

\begin{itemize}
	\item $f$ is irreducible.
	
	In this case, up to a change of coordinates, we may suposse that $f=y^2-x^n$ where $2<n\in\mathbb{N}$ is odd. In this case, $\tau(f)=\mu(f)=n-1$ and, by Example \ref{qh}, $\{2xdy-nydx,df\}$ is a Saito basis for $S(f)$.
	
	\item $f$ has two transversal components.
	
	Up to a change of coordinates, we may consider $f=xy$. So, $\tau(f)=\mu(f)=2I(x,y)-2+1=1$ and, by Example \ref{qh}, a Saito basis for $S(f)$ is
	$\{xdy-ydx,df\}.$
	
	\item $f$ has two components with same tangent.
	
	In this case, after a suitable change of coordinates, we may assume that $f=y(y-x^n)$ with $I(y,y-x^n)=n>1$, hence $\tau(f)=\mu(f)=2I(y,y-x^n)-2+1=2n-1$. Since $f$ is quasi-homogeneous, by Example \ref{qh},  
	$\{xdy-nydx,df\}$ is a Saito basis for $S(f)$.  
\end{itemize}

\subsection{Multiplicity three}

In this case, we have $f=\prod_{i=1}^{r}f_i$ with each $f_i$ irreducible and $1\leq r\leq 3$.

\begin{itemize}
	\item $r=1$.
	
	In this case, $f$ is irreducible. According to \cite{bayer} and \cite{multifour}, up to a change of coordinates we have two possibilities:
	\begin{itemize}
		\item[1)] $f=y^3-x^n$ with $1=gcd(3,n)<3<n$.
		
		Since $f$ is quasi-homogeneous, we have $\tau(f)=\mu(f)=2(n-1)$, and by Example \ref{qh}, a Saito basis for $S(f)$ is: $\{3xdy-nydx,df\}$.
		
		\item[2)] $f=y^3-x^m+x^{m-k}y$ with $2\leq k\leq\left [\frac{m}{3}\right ]$.
		
	This case is a particular instance of Example \ref{ex-cof}. A Saito basis is given by
		$$\begin{array}{c}
		\{\omega_1=\left ( \frac{m-3k}{3}x^{m-k}-my^2-\frac{m-k}{3}x^{m-2k}y\right )dx+\left ( 3xy+\frac{2}{3}x^{m-2k+1}\right )dy, \vspace{0.25cm}\\ \omega_2=\left (-mx^{k-1}y+\frac{(m-k)(m-3k)}{3m}x^{m-k-1}\right )dx+\left (\frac{m-3k}{m}y+3x^k\right )dy\}\end{array}$$
		and, according to Remark \ref{tau=3}, we have $\tau(f)=2(m-1)-(k-1)=2m-k-1$.
	\end{itemize}
	
	\item $r=2$.
	
	In this case, we may assume that $f=yg$ where $m(g)=2$ and the semigroup of $\mathcal{C}_g$ is $\Gamma(g)=\langle 2,n\rangle$ with $1=gcd(2,n)<2<n$. We have the following possibilities:
	
	\begin{itemize}
		\item[1)] $\mathcal{C}_g$ has tangent $x=0$.
		
		In this situation, up to a change of coordinates, we may suppose that $f=y(x^2-y^n)$. Notice that $f$ is quasi-homogeneous and $\tau(f)=\mu(f)=\mu(y)+\mu(x^2-y^n)+2I(y,x^2-y^n)-r+1=n+2$. By Example \ref{qh}, a Saito basis for $S(f)$ is $\{nxdy-2ydx,df\}.$
		
		\item[2)] $\mathcal{C}_g$ has tangent $y=0$.
		
		Now we consider different cases according to $I(y,g)$:
		
		\begin{itemize}
			\item[a)] $I(y,g)=2m<n$.
			
			In this case, $\mathcal{C}_g$ admits a parametrization (up to a change of coordinates) $(t^2,t^{2m}+t^n)$, that is, $g=(y-x^m)^2-x^n=y^2-2x^my+x^{2m}-x^n$. By Example \ref{m-2}, a Saito basis for $S(g)$ is
			$\{\omega_1=2xdy-(ny+(2m-n)x^m)dx,\omega_2=dg=2(y-x^m)dy-(2mx^{m-1}(y-x^m)+nx^{n-1})dx\}$ with $\omega_1\wedge\omega_2=-2ngdx\wedge dy$ and, according to Corollary \ref{yf}, a Saito basis for $S(f)=S(yg)$ is
			$$\begin{array}{l}
			\left \{ \eta_1=(2mx^{m-1}-nx^{n-m-1})\omega_1-(n-2m)\omega_2\right .\\
			\hspace{0.75cm}=y(n^2x^{n-m-1}-4m^2x^{m-1})dx+2((2m-n)y+nx^m-nx^{n-m})dy\vspace{0.2cm}
			\\
			\left . \hspace{0.2cm}\eta_2=y\omega_1=2xydy-y(ny+(2m-n)x^m)dx \right \}
			\end{array}$$
			with
			$$\begin{array}{l}
			\eta_1\wedge d(yg) =(-4m(n-m)x^{m-1}+3n^2x^{n-m-1})ygdx\wedge dy=k_1ygdx\wedge dy\\ \eta_2\wedge d(yg) = (-3ny-2mx^m+nx^n)ygdx\wedge dy=k_2ygdx\wedge dy
		\end{array}$$
		In particular, $$\tau(f)=\mu(f)-I(k_1,k_2)=0+(n-1)+2I(y,g)-2+1-I(k_1,k_2)=n+3m-1.$$
			
			\item[b)] $I(y,g)=n$.
			
			In this case, up to a change of coordinates, $\mathcal{C}_g$ admits parametrization $(t^2,t^n)$ or $(t^2,t^n+t^{n+2m-1})$ for some $1\leq m\leq \frac{n-3}{2}$ that is, $g=y^2-x^n$ or $g=y^2-2x^{\frac{n+2m-1}{2}}y-x^n+x^{n+2m-1}$.
			
			\begin{itemize}
				\item[i)] $g=y^2-x^n$.
				
				Since $f=yg=y(y^2-x^n)$ is quasi-homogeneous, by Example \ref{qh}, a Saito basis for $S(f)$ is $\{nydx-2xdy,df\}$ and $\tau(f)=\mu(f)=n-1+2I(y,g)-2+1=3n-2$.
				
				\item[ii)] $g=y^2-2x^{\frac{n+2m-1}{2}}y-x^n+x^{n+2m-1}$.
				
				According to Example \ref{m-2}, $\{\omega_1=2xdy-(ny+(2m-1)x^{\frac{n+2m-1}{2}})dx,\omega_2=dg=2(y-x^{\frac{n+2m-1}{2}})dy-((n+2m-1)x^{\frac{n+2m-3}{2}}(y-x^{\frac{n+2m-1}{2}})+nx^{n-1})dx\}$ is a Saito basis for $S(g)$ with $\omega_1\wedge\omega_2=-2ngdx\wedge dy$ and consequently, by Corollary \ref{yf}, a Saito basis for $S(f)=S(yg)$ is given by

				$$\begin{array}{l}
					\left \{ \eta_1=((n+2m-1)x^{\frac{n+2m-3}{2}}-nx^{\frac{n-2m-1}{2}})\omega_1+	(2m-1)\omega_2\right .\\
					\hspace{0.75cm}=2((2m-1)y-nx^{\frac{n-2m+1}{2}}+nx^{\frac{n+2m-1}{2}})dy+\\  \hspace{2.5cm}+(n^2x^{\frac{n-2m-1}{2}}-(n+2m-1)^2x^{\frac{n+2m-3}{2}})ydx\vspace{0.2cm}
					\\
					\left . \hspace{0.2cm}\eta_2=y\omega_1=2xydy-y(ny+(2m-1)x^\frac{n+2m-1}{2})dx \right \}
				\end{array}$$
				with
				$$\begin{array}{l}
					\eta_1\wedge d(yg) =(3n^3x^{\frac{n-2m-1}{2}}-2(m+1)(n+2m-1)x^{\frac{n+2m-3}{2}})ygdx\wedge dy=k_1ygdx\wedge dy\\ \eta_2\wedge d(yg) = (-3ny-(2m-1)x^{\frac{n+2m-1}{2}})ygdx\wedge dy=k_2ygdx\wedge dy
				\end{array}$$
				Thus, we obtain $$\tau(f)=\mu(f)-I(k_1,k_2)=0+(n-1)+2I(y,g)-2+1-I(k_1,k_2)=\frac{5n+2m-3}{2}.$$				
			\end{itemize}
		\end{itemize}
	\end{itemize}
	
	\item $r=3$.
	
	In this case, $f=f_1f_2f_3$ with $m(f_i)=1$ for $1\leq i\leq 3$. Notice that $\mu(f)=2(I(f_1,f_2)+I(f_1,f_3)+I(f_2,f_3))-2$ and we have the following possibilities:
	\begin{itemize}
		\item[1)] $f$ has at least two distinct tangent lines.
		
		Without loss of generality, we may assume that $I(f_1,f_2)=1$ and $I(f_1,f_3)=n\geq 1$. In particular, we get $I(f_2,f_3)=1$ and, up to a change of coordinates, we can consider $f=xy(y-x^n)$ that is quasi-homogeneous. In this way, $\tau(f)=\mu(f)=2n+2$ and a Saito basis for $S(f)$ is, according to Example \ref{qh}, given by $\{nydx-xdy,df\}$.

		\item[2)] $f$ has just one tangent line.
		
		In this case, we suppose that $1<I(f_1,f_2)=n\leq m=I(f_1,f_3)$. In particular, we have $I(f_2,f_3)\geq n$ and we have the following possibilities.
		
		\begin{enumerate}
			\item[a)] $I(f_1,f_2)=n<m= I(f_1,f_3)$.
			
			In this situation, we get $I(f_2,f_3)=n$ and up to a change of coordinates, we may assume that $f=y(y-x^n)(y-x^m)$. Let us denote $g=(y-x^n)(y-x^m)=y^2-(x^n+x^m)y+x^{n+m}$. By Example \ref{m-2} we can determine a Saito basis for $S(g)$ and using it, by Corollary \ref{yf} we obtain a Saito basis for $S(f)=S(yg)$ in the form
			$$\begin{array}{l}
				\left \{ \eta_1=4\left ((m-n)y+nx^n+h.o.t\right )dy-\left (4nmx^{n-1}y+h.o.t.\right )dx,\right.\\
				\hspace{0.2cm}\left .\eta_2=2xydy-(2ny+
                2(m-n)x^m+h.o.t.)ydx\right \}\end{array}$$ such that
			$$\eta_1\wedge df=\left ( -4n(2m+n)x^{n-1}+h.o.t.\right )fdx\wedge dy=k_1fdx\wedge dy\ \ \mbox{and}$$
			$$\eta_2\wedge df=\left (-6ny-(m-n)x^m+h.o.t.\right )fdx\wedge dy=k_2fdx\wedge dy.$$ In this way, we get $\tau(f)=\mu(f)-I(k_1,k_2)=4n+2m-2-(n-1)=3n+2m-1.$
									
			\item[b)] $I(f_1,f_2)=n= I(f_1,f_3)$.
			
			Notice that $I(f_2,f_3)=n$. Up to change of coordinates, we can consider $f=y(y-x^n)(y+x^n-ax^k)$ with $n<k<2n-1$ and we have the possibilities:
			\begin{enumerate}
				\item[i)] $a=0$.
				
				In this case $f$ is quasi-homogeneous and $\tau(f)=\mu(f)=6n-2$. By Example \ref{qh}, a Saito basis for $S(f)$ is $\{nydx-xdy,df\}$. 
				
				\item[ii)] $a\neq 0$.
				
				Denoting $g=y^2-ax^ky-(x^{2n}-ax^{k+n})$, we compute a Saito basis for $S(g)$ and, by Corollary \ref{yf}, we can obtain a Saito basis for $S(f)=S(yg)$ given by
					$$\begin{array}{l}
					\left \{ \eta_1=\left (2a(k-n)y-4nx^{2n-k}-a^2(k-n)x^k+h.o.t\right )dy+\right .\\
					\hspace{1.5cm}+\left ((4n^2x^{2n-k-1}-ka^2(k-n)x^{k-1})y+h.o.t.\right )dx,\\
					\hspace{0.2cm}\left .\eta_2=2xydy-(2ny+a(k-n)x^k+h.o.t.)ydx\right \}\end{array}$$ such that
				$$\eta_1\wedge df=\left ( 12n^2x^{2n-k-1}+h.o.t.\right )dx\wedge dy=k_1fdx\wedge dy\ \ \mbox{and}$$
				$$\eta_2\wedge df=\left (-6ny+h.o.t.\right )dx\wedge dy=k_2fdx\wedge dy.$$
				Hence, $\tau(f)=\mu(f)-I(k_1,k_2)=4n+k-1$.
				
			\end{enumerate}
			
		\end{enumerate}
	\end{itemize}

\end{itemize}

\vspace{1cm}

\begin{center}
\begin{tabular}{lclcl}
	Carvalho, E. & & Fern\'{a}ndez-S\'{a}nchez, P. & & Hernandes, M. E. \\
	{\it emilio.carvalho$@$ufmt.br} & & {\it pefernan$@$pucp.edu.pe} & & {\it mehernandes$@$uem.br} 
	\\
	INMA-UFMT & & PUCP & & DMA-UEM \\
	Av. Fernando Corr\^ea da Costa 2367 & & Av. Universit\'{a}ria 1801 &  & Av. Colombo 5790 \\
	Cuiab\'a-MT 78060-900 & & San Miguel, Lima 32 & & Maring\'{a}-PR 87020-900 \\
	Brazil & & Peru &  & Brazil
\end{tabular}
\end{center}

\end{document}